\pgfplotsset{compat=1.9}
\DeclareMathOperator{\diag}{diag}
\DeclareMathOperator{\trace}{trace}
 \newcommand{\rank}{\mathrm{rk}}
\newcommand{\norm}[1]{\lVert #1 \rVert}
\pgfplotsset{select coords between index/.style 2 args={
		x filter/.code={
			\ifnum\coordindex<#1\fi
			\ifnum\coordindex>#2\fi
		}
}}
\definecolor{lowrankcolor}{rgb}{.75,.75,.75}
\renewcommand{\leq}{\leqslant}
\renewcommand{\geq}{\geqslant}
\theoremstyle{definition}
\newtheorem{definition}{Definition}[section]
\theoremstyle{theorem}
\newtheorem{lemma}[definition]{Lemma}
\newtheorem{theorem}[definition]{Theorem}
	\title{Some algorithms for maximum volume and cross approximation of symmetric semidefinite matrices}
\author{Stefano Massei\footnote{Centre for Analysis, Scientific Computing and Applications (CASA), TU Eindhoven, Netherlands. E-mail: s.massei@tue.nl} }
\date{}
\begin{document}
	\maketitle
\begin{abstract}
	Various applications in numerical linear algebra and computer science are related to		
	selecting the  $r\times r$ submatrix of maximum volume contained in a given matrix $A\in\mathbb R^{n\times n}$. We propose a new greedy algorithm of cost $\mathcal O(n)$, for the case $A$ symmetric positive semidefinite (SPSD) and we discuss its extension to related optimization problems such as the maximum ratio of volumes. In the second part of the paper we prove that any SPSD matrix admits a cross approximation built on a principal submatrix whose approximation error is bounded by $(r+1)$ times the error of the best rank $r$ approximation in the nuclear norm. In the spirit of recent work by Cortinovis and Kressner we derive some deterministic algorithms, which are capable to retrieve a quasi optimal cross approximation with cost $\mathcal O(n^3)$. 
\end{abstract}
\section{Introduction}
Given  $A\in\mathbb R^{n\times n}$ and $r\in\mathbb N$, this work is mainly concerned with the selection of row and column subsets of indices  $I,J\subset \{1,\dots,n\}$ of cardinality $r$ with one of the following features:
\begin{itemize}
	\item[$(i)$] $A(I,\ J)$ is a \emph{maximum volume} submatrix that is $$\mathcal V(A(I,\ J))=\max_{|\widehat I|=|\widehat J|=r}\mathcal V(A(\widehat I,\ \widehat J)),\qquad \mathcal V(A(I,\ J)):=|\det(A(I,\ J))|,$$ 
	\item[$(ii)$] given another matrix $B\in\mathbb R^{n\times n}$, $(I,\ J)$ is a maximum point of
	$$
	\frac{\mathcal V(A(I,\ J))}{ \mathcal V( B(I, \ J))}=\max_{|\widehat I|=|\widehat J|=r}\frac{\mathcal V(A(\widehat I,\ \widehat J)}{ \mathcal V( B(\widehat I, \ \widehat J))},
	$$
	\item[$(iii)$] $A_{IJ}:=A(:, \ J)A(I,\ J)^{-1}A(I,\ :)$ is a \emph{quasi optimal cross approximation}, i.e., it verifies
	$$
	\norm{A-A_{IJ}}\leq p(r)\cdot \min_{\rank(C)=r}\norm{A-C},
	$$
	for a low-degree polynomial $p(\cdot)$ and a matrix norm $\norm{\cdot}$.
\end{itemize}

A connection between problems $(i)$ and $(iii)$ is given by a result of Goreinov and Tyrtyshnikov \cite{goreinov2001maximal}, which says that if $A(I,\ J)$ has maximum volume then the cross approximation $A_{IJ}$ satisfies the bound 
\begin{equation}\label{eq:maxvol-bound}
\norm{A-A_{IJ}}_{\max}\leq (r+1)\sigma_{r+1}(A),
\end{equation}  
with $\sigma_k(\cdot)$ indicating the $k$-th singular value and $\norm{\cdot}_{\max}$ denoting the maximum magnitude among the entries of the matrix argument. We remark that,  in general being a quasi optimal cross approximation does not imply any connection between the volume of $A(I, \ J)$ and the maximum volume. Indeed, while $(i)$ is an NP hard problem, it has been recently shown that a quasi optimal approximation with respect to the Frobenius norm always exists \cite{zamarashkin2018existence} and can be found in polynomial time \cite{cortinovis2019low}. 

\begin{paragraph}{Maximum volume.}
	Problem $(i)$ finds application in a varied range of fields that highlight how the maximum volume concept is multifaceted. 
	For instance, identifying the optimal nodes for polynomial interpolation on a given domain, the so called \emph{Fekete points}, can be recast as selecting the maximum volume submatrix of Vandermonde matrices on suitable discretization meshes \cite{sommariva2009computing}. 
	In the optimal experimental design of  linear regression models, it is of interest to select the subset of experiments, which is influenced the least by the noise in the measurements. To pursue this goal, the \emph{D-optimality} criterion suggests to look at the covariance matrix of the model and find its principal subblock of maximum volume \cite{kiefer1961optimum}. 
	Other fields where $(i)$ arises are rank revealing factorizations \cite{gu1996efficient,gu2004strong}, preconditioning \cite{arioli2015preconditioning} and tensor decompositions \cite{oseledets2010tt}.
	
	Finding a submatrix with either exact or approximate maximum volume are both NP hard problems \cite{ccivril2009selecting,summa2014largest}. Despite this downside there has been quite some effort in the development of efficient heuristic algorithms for volume maximization. 
	A central tool for our discussion is one of these methods: the \emph{Adaptive Cross Approximation (ACA)} \cite{bebendorf2000approximation,harbrecht2012low}. ACA is typically presented as a low-rank matrix approximation algorithm but it can be interpreted as a greedy method for maximizing the volume. When used for low-rank approximation, ACA is equivalent to a Gaussian elimination process with rook pivoting, and it returns an incomplete LU factorization. 
	In particular, the approximant computed by ACA is of the form in \eqref{eq:maxvol-bound} although there is no clear relation between the maximum volume submatrix and the submatrix selected by ACA. 
	On the other hand, the latter can be used as starting guess for procedures that ``locally maximize'' the volume, e.g., \cite{goreinov2010find,mikhalev2018rectangular}. These algorithms guarantee that the volume of the submatrix that they return can not be increased with a small cardinality change of either its row or column index set. See also \cite{osinsky2018rectangular} for an analysis of these techniques.
	
	In many situations the matrix $A$ is \emph{symmetric positive semidefinite (SPSD)}. For instance, this setting arises  in kernel-based interpolation \cite{fasshauer2015kernel}, low-rank approximation of covariance matrices \cite{harbrecht2012low,kressner2020certified} and discretization of operators involving convolution with a positive semidefinite kernel function \cite{bebendorf2008hierarchical}. The SPSD structure comes with a major benefit: the submatrix of maximum volume  is always attained for a principal submatrix \cite{cortinovis2020maximum}. Although this does not cure the NP hardness of the task, it reduces significantly the search space by adding the constraint $I=J$. 
	
	In Section~\ref{sec:maxvol} we propose a new efficient procedure for the local maximization of the volume over the set of principal submatrices. More specifically, our algorithm returns an $r\times r$ principal submatrix whose volume is maximal over the set of principal submatrices that can be obtained with the replacement of one of the selected indices. Implementation details and complexity analysis are discussed in Section~\ref{sec:details}. Numerical tests are reported in Section~\ref{sec:test1}.
\end{paragraph}
\begin{paragraph}{Maximum ratio of volumes.}
	To the best of our knowledge, there is no reference to problem $(ii)$ in the literature and there are no direct links with either $(i)$ or $(iii)$ when generic matrices $A,B$ are considered.  Nevertheless, we might think at the following situation: suppose that $A$ is SPSD, $B$ is banded and symmetric positive definite and that we want to compute a cross approximation of $E:=T_B^{-\top}AT_B^{-1}$ --- where $T_B$ indicates the Cholesky factor of $B$ --- without forming $E$. Since $E$ is SPSD  it would make sense to apply ACA with diagonal pivoting. However, this requires to evaluate the diagonal of $E$, which is as expensive as forming the whole matrix. Our idea is to replace the diagonal pivoting with the solution of $(ii)$ as heuristic strategy for finding a cross approximation for $E$.
	
	Indeed, the Binet-Cauchy theorem tells us that a principal minor of  $E$ satisfies
	\begin{align*}
	\det(E(J, \ J)) &= \sum_{|H|=|K|=r} \det(T_B^{-\top}(J, \ H))\det(A(H,\ K))\det(T_B^{-1}(K, \ J))\\
	&=\det(T_B^{-\top}(J, \ J))\det(A(J,\ J))\det(T_B^{-1}(J, \ J))\\
	&+\sum_{(H,K)\neq(J,J)} \det(T_B^{-\top}(J, \ H))\det(A(H,\ K))\det(T_B^{-1}(K, \ J)).
	\end{align*}
	If $B$ is banded and well conditioned, then $T_B$ is banded and the magnitude of the entries of $T_B^{-1}$ decays exponentially with the distance from the main diagonal \cite{demko1984decay}. Under these assumptions we might have
	\begin{equation}\label{eq:ratio-det}
	\det(E(J, \ J))\ \approx\ \det(T_B^{-\top}(J, \ J))\det(A(J,\ J))\det(T_B^{-1}(J, \ J))\ \approx \ \frac{\det(A(J, \ J))}{\det(B(J, \ J))}.
	\end{equation}
	Based on this argument we propose to  select $J$ via a greedy algorithm for $(ii)$ and return $E_J:=E(:,\ J) E(J, \ J) E(J, :)$ as approximation of $E$. Note that, forming the factors of $E_J$ only requires to solve $r$ linear systems  with $T_B$ and to compute $r$ matrix vector products with $A$.  
	
	In Section~\ref{sec:max_ratio} we describe how to extend the ACA based techniques for addressing $(i)$  to deal with $(ii)$. We conclude by testing the approximation property of the approach in Section~\ref{sec:test1}.
\end{paragraph}
\begin{paragraph}{Quasi optimal cross approximations.}
	In contrast to the typical robustness of ACA and its simple formulation, very little can be said a priori on the quality of the cross approximation that it returns. Even for structured cases, a priori  bounds for the approximation error contain factors that grow exponentially with $r$ \cite{higham2002accuracy,harbrecht2012low}, with the only exception of the doubly diagonally dominant case \cite{cortinovis2020maximum}. 
	
	Recently, Zamarshkin and Osinsky proved in \cite{zamarashkin2018existence} the existence of quasi optimal cross approximations with respect to the Frobenius norm by means of a probabilistic method. Derandomizing the proof of this result, Cortinovis and Kressner have shown in \cite{cortinovis2019low} how to design an algorithm that finds a quasi optimal cross approximation in polynomial time.
	
	In Section~\ref{sec:certified} we describe how to modify the technique used in \cite{zamarashkin2018existence} to prove that for an SPSD matrix $A$ there exists a quasi optimal cross approximation with respect to the nuclear norm which is built on a principal submatrix, i.e., $I=J$. This  is  of particular interest in uncertainty quantification: if $A$ is the covariance matrix of a Gaussian process, then the nuclear norm of the error bounds the Wasserstein distance with respect to another Gaussian process that can be efficiently sampled \cite{kressner2020certified}. 
	
	In Section~\ref{sec:alg-cert1}-\ref{sec:alg-cert2} we propose  two algorithms, obtained with the method of conditional expectations, which are able to retrieve  quasi optimal cross approximations of SPSD matrices in polynomial time. We conclude by discussing the algorithmic implementation and  reporting, in Section~\ref{sec:test2},  numerical experiments illustrating the performances of the methods. 
\end{paragraph}
\begin{paragraph}{Notation.}
	In this work we use Matlab-like notation for denoting the submatrices. The identity matrix of dimension $n$ is indicated with $\mathsf{Id}_n$ and we use $e_j$ to denote the $j$-th column of the identity matrix, whose dimension will be clear from the context. The symbols $\norm{\cdot}_*, \norm{\cdot}_F$ indicate the nuclear and Frobenius norm, respectively.
\end{paragraph}
\section{Maximizing the volume and the ratio of volumes}\label{sec:maxvol-submatrix}
Given $r\in\mathbb N$, an SPSD matrix $A\in\mathbb R^{n\times n}$ and a symmetric positive definite matrix $B\in\mathbb R^{n\times n}$, the ultimate goal of this section is to discuss some numerical methods for dealing with the following optimization problems:
\begin{align}\label{eq:maxvol}
\max_{\widehat J\subset \{1,\dots, n\}, \ |\widehat J|=r}\mathcal V(A(\widehat J,\ \widehat J)),\\
\max_{\widehat J\subset \{1,\dots, n\}, \ |\widehat J|=r}\frac{\mathcal V(A(\widehat J,\ \widehat J))}{\mathcal V(B(\widehat J,\ \widehat J))}.\label{eq:maxvol_ratio}
\end{align}
When $B=\mathsf{Id}_n$, \eqref{eq:maxvol_ratio} reduces to \eqref{eq:maxvol}; moreover \eqref{eq:maxvol} corresponds to the maximum volume problem because for an SPSD matrix, the maximum is attained at a principal submatrix \cite{cortinovis2020maximum}. We start by recalling a well known greedy strategy to deal with \eqref{eq:maxvol}, the so-called \emph{Adaptive Cross Approximation (ACA)} \cite{harbrecht2012low}. Then, we will see how to generalize ACA for addressing \eqref{eq:maxvol_ratio}.   
\subsection{Adaptive cross approximation}\label{sec:aca}
The selection of high volume submatrices of $A$ is intimately related with the low-rank approximation of $A$. The link is the \emph{cross approximation} \cite{bebendorf2000approximation,tyrtyshnikov2000incomplete}, which associates with a given subset of indices $J=\{j_1,\dots,j_r\}$, or equivalently with an invertible  submatrix $A(J, \ J)$, the rank $r$ matrix approximation\footnote{Cross approximation is generally associated with two subsets of indices, one for the rows and one for the columns of the submatrix. In view of the positive definiteness of $A$ we restrict to principal submatrices.}
\begin{equation}\label{eq:cross}
A_J:=A(:, \ J)A(J,\ J)^{-1}A(J,\ :).
\end{equation}
Cross approximations are attractive because to build $A_J$ only requires a partial evaluation of the entries of $A$, which is  crucial when considering large scale matrices. Moreover, since the residual matrix $R_J:=A-A_J$ is SPSD, the approximation error can be cheaply estimated as
\begin{equation} \label{eq:trace-error}
\trace(R_J)=\norm{R_J}_*\geq \norm{R_J}_F\geq\norm{R_J}_2\geq \frac{\trace(R_J)}{ n}.
\end{equation}
When $J$ is a maximum point of \eqref{eq:maxvol}, $A_J$ yields a quasi optimal approximation error with respect to the maximum norm \cite{goreinov2001maximal}. However, solving \eqref{eq:maxvol} is NP hard which paves the way to the use of heuristic approaches such as ACA.  

The ACA method selects $J$ with a process analogous to Gaussian elimination with complete pivoting. The algorithm begins by choosing $j_1=\arg\max_j A_{jj}$ and computes $R_{J_1}=A-A(:,\ j_1)A_{j_1j_1}^{-1}A(j_1,\ :)$. Then, the procedure is iterated on the residual matrices $R_{J_i}$, $i=1,\dots,r-1$ in order to retrieve $r$ indices. The elements $(R_{J_i})_{j_{i+1}j_{i+1}}$ correspond to the first $r$ pivots selected by the Gaussian elimination with complete pivoting on the matrix $A$, and we have the identity 
\begin{equation}\label{eq:pivot}
\det(A(J,\ J))=\prod_{i=0}^{r-1} (R_{J_i})_{j_{i+1}j_{i+1}},
\end{equation}
where $R_{J_0}:=A$. In particular, \eqref{eq:pivot} explains  that each step of ACA  augments the set of selected indices by following a greedy strategy with respect to the volume of the selected submatrix. The whole procedure is reported in Algorithm~\ref{alg:aca}. Note that, if one stores the vectors $u_1,\dots, u_r$, then  only the diagonal and the columns $j_1,\dots,j_r$, of  $A$, need to be evaluated. The efficient implementation of the algorithm replaces the computation of the  residual matrix at line~\ref{step:residual} with the update of the diagonal of $R_J$. Computing $R_J(:, j_k)= A(:,j_k)-U_{k-1}U_{k-1}(j_k,:)^\top$, $U_{k-1}:=[u_1,\dots,u_{k-1}]$, only requires a partial access to $A$ as well. In case the matrix $A$ is not formed explicitly and its entries are evaluated with a given handle function, Algorithm~\ref{alg:aca} requires $\mathcal O(rn)$ storage and its computational cost is $\mathcal O((r+c_A)rn)$ where $c_A$ denotes the cost of evaluating one entry of $A$.
\begin{minipage}[t]{.48\linewidth}
	\begin{algorithm}[H] 
		\small 
		\caption{ACA for \eqref{eq:maxvol}}\label{alg:aca}
		\begin{algorithmic}[1]
			\Procedure{aca}{$A,r$}
			\State Set $R_J:=A$, $J:= \emptyset$
			\For{$k:=1,2,\dots,r $} 
			\State $j_k := \arg\max_{j} (R_{J})_{jj} $  
			\State $J \gets J\cup\{j_k\}$ 
			\If{$k<r$}
			\State $u_k := R_J(:,\ j_k) / \sqrt{(R_J)_{j_kj_k}}$  
			\State $R_J \gets R_J - u_k u_k^\top$ \label{step:residual}
			\EndIf
			\EndFor
			\State \Return $J$
			\EndProcedure
		\end{algorithmic}
	\end{algorithm} 
\end{minipage}
\begin{minipage}[t]{.48\linewidth}
	\begin{algorithm}[H] 
		\small 
		\caption{ACA for \eqref{eq:maxvol_ratio}}\label{alg:aca_ratio}
		\begin{algorithmic}[1]
			\Procedure{aca\_ratio}{$A,B,r$}
			\State Set $R_J^{(A)}:=A$, $R_J^{(B)}:=B$, $J:= \emptyset$
			\For{$k:=1,2,\dots,r $} 
			\State $j_k := \arg\max_{j} (R_{J}^{(A)})_{jj} /  (R_{J}^{(B)})_{jj}$  
			\State $J \gets J\cup\{j_k\}$ 
			\If{$k<r$}
			\State $u_k^{(A)} := R_J^{(A)} (:,\ j_k) / \sqrt{(R_J^{(A)} )_{j_kj_k}}$  
			\State $R_J^{(A)}  \gets R_J^{(A)}  - u_k^{(A)}  (u_k^{(A)})^\top$ 
			\State $u_k^{(B)} := R_J^{(B)} (:,\ j_k) / \sqrt{(R_J^{(B)} )_{j_kj_k}}$  
			\State $R_J^{(B)}  \gets R_J^{(B)}  - u_k^{(B)}  (u_k^{(B)})^\top$ 
			\EndIf
			\EndFor
			\State \Return $J$
			\EndProcedure
		\end{algorithmic}
	\end{algorithm} 
\end{minipage}
\subsection{Local maximization}\label{sec:maxvol}
Let us suppose that a certain index set $J=\{j_1,\dots,j_r\}$ is given. Inspired by \cite{goreinov2010find}, we would like to know whether the volume of $A(J, \ J)$ is \emph{locally optimal}, in the sense that it cannot be increased with the replacement of just one of the indices in $J$. Practically, this requires to check that:  \begin{equation}\label{eq:det-update}\frac{\det(A(\widehat J,\ \widehat J))}{\det(A(J,\ J))}\leq 1,\qquad \forall \widehat J:\quad |J\cap\widehat J|=r-1,\quad |\widehat J|=r.\end{equation} 
For the low-rank approximation problem in the maximum norm, a locally optimal determinant is sufficient to reach a quasi optimal accuracy.
\begin{lemma}
	Let $A\in\mathbb R^{n\times n}$ be an SPSD matrix and let $J$ be an index set such that condition \eqref{eq:det-update} is verified. Then
	\[\norm{A-A_{J}}_{\max}\leq (r+1)\sigma_{r+1}(A).\]
\end{lemma}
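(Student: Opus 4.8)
The plan is to reduce everything to the diagonal of the residual and then run the classical Goreinov--Tyrtyshnikov estimate on an $(r+1)\times(r+1)$ principal submatrix. First I would assume, after a symmetric permutation, that $J=\{1,\dots,r\}$, and that $A(J,\ J)$ is invertible (which is implicit, since otherwise neither $A_J$ nor the ratios in \eqref{eq:det-update} are defined). Writing $A$ in $2\times 2$ block form with leading block $A(J,\ J)$, a direct computation shows that $A-A_J$ is supported on the trailing block and equals the Schur complement $S:=A(K,\ K)-A(K,\ J)A(J,\ J)^{-1}A(J,\ K)$, where $K=\{r+1,\dots,n\}$. Since $S$ is again SPSD, each of its $2\times 2$ principal minors is nonnegative, so $S_{kl}^2\leq S_{kk}S_{ll}$ and hence $\norm{A-A_J}_{\max}=\norm{S}_{\max}=\max_{k\notin J}S_{kk}$. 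It therefore suffices to bound every diagonal entry $S_{kk}$.

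Fix $k\notin J$ and set $\widehat A:=A(J\cup\{k\},\ J\cup\{k\})$, an $(r+1)\times(r+1)$ SPSD principal submatrix of $A$; by the bordered-determinant (Schur) identity, $S_{kk}=\det(\widehat A)/\det(A(J,\ J))$, and the case $\det(\widehat A)=0$ is trivial, so I may assume $\widehat A$ positive definite. The $r\times r$ principal submatrices of $\widehat A$ are exactly $A(J,\ J)$ together with the $r$ matrices $A(\widehat J,\ \widehat J)$ with $\widehat J=(J\setminus\{j_i\})\cup\{k\}$, i.e.\ precisely the index sets appearing in \eqref{eq:det-update}; hence all of them have determinant at most $V:=\det(A(J,\ J))$. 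Now the adjugate formula gives that the diagonal entries of $\widehat A^{-1}$ are these $r\times r$ principal minors divided by $\det(\widehat A)$, so $\trace(\widehat A^{-1})\leq (r+1)V/\det(\widehat A)$; since $\widehat A^{-1}$ is positive definite, $\sigma_{r+1}(\widehat A)^{-1}=\lambda_{\max}(\widehat A^{-1})\leq \trace(\widehat A^{-1})$, whence $S_{kk}=\det(\widehat A)/V\leq (r+1)\,\sigma_{r+1}(\widehat A)$. Finally, Cauchy interlacing for the eigenvalues of the symmetric matrix $A$ gives $\sigma_{r+1}(\widehat A)=\lambda_{\min}(\widehat A)\leq\lambda_{r+1}(A)=\sigma_{r+1}(A)$, and maximizing over $k$ finishes the proof.

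The step I expect to be the crux is the second paragraph: in the classical argument for general matrices one bounds an arbitrary residual entry by an $(r+1)\times(r+1)$ determinant and needs control of \emph{all} of its $r\times r$ minors, whereas \eqref{eq:det-update} only controls the \emph{principal} (symmetric-swap) minors. Making those suffice is exactly where the SPSD structure is used twice --- once to reduce $\norm{A-A_J}_{\max}$ to the diagonal of the residual, and once to bound $\lambda_{\max}(\widehat A^{-1})$ by its trace (equivalently, the max-norm of $\widehat A^{-1}$ by its diagonal). The remaining non-degeneracy bookkeeping (taking $A(J,\ J)$ and each relevant $\widehat A$ invertible, with singular cases contributing zero) should be routine.
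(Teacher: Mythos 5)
Your proof is correct, and every step checks out: the reduction of $\norm{A-A_J}_{\max}$ to the diagonal of the SPSD residual, the identity $S_{kk}=\det(\widehat A)/\det(A(J,\ J))$, the observation that the $r$ principal $r\times r$ minors of $\widehat A$ other than $A(J,\ J)$ are exactly the swaps controlled by \eqref{eq:det-update}, the adjugate/trace bound $\lambda_{\max}(\widehat A^{-1})\leq\trace(\widehat A^{-1})\leq (r+1)\det(A(J,\ J))/\det(\widehat A)$, and the final Cauchy interlacing step. The paper's proof shares the same skeleton — it also passes to the diagonal entries of the residual, recognizes each as the Schur complement of $A(J,\ J)$ in $A(\widetilde J,\ \widetilde J)$ with $\widetilde J=J\cup\{h\}$, and finishes with interlacing — but for the key middle step it invokes the Goreinov--Tyrtyshnikov bound \eqref{eq:maxvol-bound} applied to the $(r+1)\times(r+1)$ matrix $A(\widetilde J,\ \widetilde J)$. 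That invocation requires knowing that $A(J,\ J)$ is of maximum volume among \emph{all} $r\times r$ submatrices of $A(\widetilde J,\ \widetilde J)$, not just the principal ones, which the paper gets from the cited fact that for SPSD matrices the maximum volume is attained at a principal submatrix. Your argument sidesteps that external input entirely: since $\trace(\widehat A^{-1})$ is the sum of the \emph{principal} cofactors divided by $\det(\widehat A)$, control of the principal minors alone suffices, and you obtain $\det(\widehat A)/\det(A(J,\ J))\leq(r+1)\lambda_{\min}(\widehat A)$ by a two-line computation. The result is a self-contained, slightly more elementary proof of the same bound; the paper's version is shorter on the page but leans on two external results where yours leans on none.
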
 
\begin{proof}
	When $n=r+1$ the submatrix $A(J,\ J)$ has maximum volume and we get the claim simply applying the result of Goreinov and Tyrtyshnikov (equation \eqref{eq:maxvol-bound}). For $n>r+1$, we remark that each diagonal entry of the residual matrix $(R_J)_{hh}$ is equal to the Schur complement of $A(J, \ J)$ in $A(\widetilde J, \ \widetilde J)$, for $\widetilde J= J\cup \{h\}$. In view of \eqref{eq:det-update}, $A(J,\ J)$ is the maximum volume $r\times r$ submatrix of $A(\widetilde J,\ \widetilde J)$ that implies
	\[
	(R_J)_{hh}\leq (r+1)\sigma_{r+1}(A(\widetilde J,\ \widetilde J))\leq (r+1)\sigma_{r+1}(A).
	\]
	Since $R$ is SPSD, $(r+1)\sigma_{r+1}(A)$ also bounds its max norm.
\end{proof}

In the following sections  we describe an efficient procedure to iteratively increase $\mathcal V(A(J,\ J))$ based on the evaluation of the  $r(n-r)$ ratios in \eqref{eq:det-update}. An algorithm for the analogous, yet simpler, task when the index replacement affects only the row or the column index set has been proposed in \cite{goreinov2010find}.

\subsubsection{Updating the determinant}
Let us remark that each $A(\widehat J,\ \widehat J)$ in \eqref{eq:det-update} is a rank-$2$ modification of the matrix $A(J,\ J)$. More precisely, if the index set $\widehat J$ is obtained by replacing $j_i\in J$ with $h\in\{1,\dots,n\}\setminus I$, then 
$$
A(\widehat J,\ \widehat J) = A(J,\ J) + U W U^\top$$
where
$$
U =\left[e_{i}\ \vline\ A(J, \ h) - A( J, \ j_i)\right], \qquad W = \begin{bmatrix}
A_{hh}+A_{j_ij_i}-2A_{hj_i}&\phantom{11}& 1\\
1&&0
\end{bmatrix},$$
and $e_i$ indicates the $i$-th vector of the canonical basis.
Applying the matrix determinant lemma yields
$$
\frac{\det(A(\widehat J,\ \widehat J))}{\det(A(J,\ J))}= \det(W^{-1})\det(W^{-1}+ U^\top A(J,\ J )^{-1}U),
$$
with
$$
W^{-1}=
\begin{bmatrix}
0&&1\\ 1&\phantom{11}&2A_{hj_i}-A_{hh}-A_{j_ij_i}
\end{bmatrix},\qquad \det(W^{-1})=-1.$$
By denoting with $D:= A(J,\ J)^{-1},B := A(:,\ J)D$ and with $C:=B A(J,\ :)$, we have that 
$$
U^\top A(J,\ J)^{-1}U = \begin{bmatrix}
D_{ii}& &B_{h i}-1\\
B_{hi}-1&\phantom{11}& [B(h,\ :)-B(j_i, \ :)][A(J, \ h)-A(J, \ j_i)]
\end{bmatrix}
$$
where we have used the identities
\begin{align*}
[A(h,\ J)-A(j_i,\ J)]A(J,\ J)^{-1} &= B(h,\ :)-B(j_i,\ :),\\
[B(h,\ :)-B(j_i,\ :)]e_{i} &= B_{hi}-1.
\end{align*}
Putting all pieces together we get
$$
W^{-1}+U^\top A(J,\ J)^{-1}U = \begin{bmatrix}
D_{ i i}&& B_{h i}\\
B_{h i}&\phantom{11} &C_{hh}-A_{hh}
\end{bmatrix}.
$$
Then, we might think at the following greedy scheme for increasing  the volume of a starting submatrix $A(J,\ J)$:
\begin{itemize}
	\item[1.] Compute the Cholesky decomposition $R^\top R=A(J,\ J)$, $\qquad\qquad\qquad\qquad\qquad \mathcal O(r^3)$,
	\item[2.] Retrieve the quantities $D_{ii}$ by solving $R^\top Rx = e_i$, $i=1,\dots, r,\ $ $\qquad\qquad\qquad\ \mathcal O(r^3)$,
	\item[3.] Compute $B=A(:,\ J)(R^\top R)^{-1}$, $\qquad\qquad\qquad\qquad\qquad\qquad\qquad\qquad\qquad\ \ \mathcal O((r+c_A)rn)$,
	\item[4.] Compute $C_{hh}$ $\forall h\in \{1,\dots,n\}\setminus J$, $\qquad\qquad\qquad\qquad\qquad\qquad\qquad\qquad\qquad\ \ \mathcal O(r(n-r))$,
	\item[5.] Compute $\mathcal V_{hi}:=\left|\det\left(\begin{bmatrix}D_{ ii}&& B_{h i}\\ B_{hi}&\phantom{11}& C_{hh}-A_{hh}\end{bmatrix}\right)\right|\ $ $\forall j_i\in J$, $\forall h\in\{1,\dots,n\}\setminus J$, $\ \ \mathcal O(r(n-r))$,
	\item[6.] Identify $\mathcal V_{\hat h \hat i}=\max_{h,i} \mathcal V_{hi}$. If  $\mathcal V_{\hat h \hat i}>1+\mathsf{tol}$ --- for a prescribed tolerance $\mathsf{tol}$ --- then update $J$ by replacing $j_{\hat i}$ with $\hat h$ and repeat the procedure. Otherwise stop the iteration. 
\end{itemize}
We will discuss possible improvements to this algorithm in the next section.
\subsubsection{Updating the quantities $B,C$ and $D$}\label{sec:details}
The previously sketched procedure requires, whenever the index set $J$ is updated, to recompute the quantities $B$, $C$ and $D$. Here, we explain how to leverage the old information to decrease the iteration cost. In the following, we assume that the new index $J_{\mathrm{new}}$ is obtained by replacing  $j_i\in J_{\mathrm{old}}$ with the index $h\in\{1,\dots,n\}\setminus J_{\mathrm{old}}$.

The new matrix $D$ is the inverse of a rank-2 modification of the old $D$, therefore it can be obtained with the Woodbury identity:
\begin{equation}\label{eq:Dupdate}
D_{\mathrm{new}}\gets D_{\mathrm{old}} \ -\ \underbrace{\begin{bmatrix}e_{i}^\top A(J_{\mathrm{old}},\ J_{\mathrm{old}})^{-1}\\B(h,\ :)-B(j_i,\ :) \end{bmatrix}^\top \begin{bmatrix}
	D_{hh}&& B_{hi}\\
	B_{hi}&\phantom{11}& C_{hh}-A_{hh}
	\end{bmatrix}\begin{bmatrix}e_{i}^\top A(J_{\mathrm{old}},\ J_{\mathrm{old}})^{-1}\\B(h,\ :)-B(j_i,\ :) \end{bmatrix}}_{\Delta D}.
\end{equation}
The decomposition $R_{\mathrm{new}}^\top R_{\mathrm{new}}=A(J_{\mathrm{new}}, \ J_{\mathrm{new}})$, can be computed with cost $\mathcal O(r^2)$ by rewriting $UWU^\top=\widetilde u_1\widetilde u_1^\top-\widetilde u_2\widetilde u_2^\top$, i.e., as the difference of two rank-1 SPSD matrices, and performing a rank-$1$ update and a rank-$1$ downdate of the old Cholesky factor \cite[Chapter 4, Section 3]{stewart1998matrix}. For instance, these routines are implemented in the Matlab command \texttt{cholupdate}.

The new matrix $B$ is also a low-rank correction of the old $B$, given by
\begin{equation}\label{eq:Bupdate}
B_{\mathrm{new}}\gets B_{\mathrm{old}} \ + \ \underbrace{[A(:, \ h) - A(:, \ j_i)]e_{i}^\top (R^\top_{\mathrm{new}}R_{\mathrm{new}})^{-1} + A(:,\ J_{\mathrm{old}}) \Delta D}_{\Delta B} .
\end{equation}

Performing the updates of $D$ and $B$ with \eqref{eq:Dupdate} and \eqref{eq:Bupdate}, respectively, brings down the iteration cost to $\mathcal O(r^2+(r+c_A)n)$, apart from the first iteration which remains $\mathcal O(r^3+(r+c_A)rn)$. The procedure is reported in Algorithm~\ref{alg:maxvol}. 

Since the use of the Woodbury identity is sometimes prone to numerical instabilities, e.g, when the selected submatrix is nearly singular, we may switch off the updating mechanism by setting the boolean variable $\mathsf{do\_update}$ to false at line~$3$.

\begin{algorithm} \small 
	\caption{Local maximization of the volume}\label{alg:maxvol}
	\begin{algorithmic}[1]
		\Procedure{local\_maxvol}{$A,J,\mathsf{tol}$}
		\State Set $\mathsf{vol\_ratio}:=+\infty$, $k:=1$ \State Set $\mathsf{do\_update}:=$true/false\Comment{Enable or disable the update of $B$ and $D$}
		\While{$\mathsf{vol\_ratio}>1+\mathsf{tol}$} 
		\If{$k==1$ or $\mathsf{do\_update}==$ false}
		\State $R=$ \texttt{chol}$(A(J,J))$
		\State $B=A(:,\ J)(R^\top R)^{-1}$  
		\State Compute $D_{ii}$ by solving $R^\top Rx = e_i$, $\quad \forall i=1,\dots,r$
		\Else
		\State $R\gets$ \texttt{cholupdate}$(R, U, W)$
		\State Update $D$ via \eqref{eq:Dupdate}
		\State Update $B$ via \eqref{eq:Bupdate}
		\EndIf 
		\State $C_{hh}\gets B(h,\ :) A(J, \ h)$, $\quad\forall h\in \{1,\dots, n\}\setminus J$ \label{step:updateC}
		\State $\mathcal V_{hi}= \left|\det\left(\begin{bmatrix}D_{ ii}& B_{h i}\\ B_{hi}& C_{hh}-A_{hh}\end{bmatrix}\right)\right|\ $ $\forall j_i\in J$, $\forall h\in\{1,\dots,n\}\setminus J$  
		\State $\left[\hat h, \ \hat i\right]\gets \arg\max_{h,i} \mathcal V_{h,i}$ 
		\State $\mathsf{vol\_ratio}\gets \mathcal V_{\hat h\hat i}$
		\State $J\gets J\cup \{\hat h\}\setminus\{j_{\hat i}\}$, $k \gets k+1$
		\EndWhile
		\State \Return $J$
		\EndProcedure
	\end{algorithmic}
\end{algorithm} 


Finally, we remark that updating the diagonal elements of $C$ with the relation
$$
C_{\mathrm{new}}\gets C_{\mathrm{old}} \ + \ B_{\mathrm{old}}e_{i}[A(h,\ :) - A(j_i, \ :)]+  \Delta B A( J_{\mathrm{new}}, \ :),
$$
would reduce the cost of line~\ref{step:updateC} in Algorithm~\ref{alg:maxvol} of a factor $r$. However, since this does not change the complexity of the iteration 
and  requires to store additional intermediate quantities, it is not incorporated in our implementation. 
\subsubsection{A new algorithm for the maximum volume of SPSD matrices}
Quite naturally, we propose to apply Algorithm~\ref{alg:maxvol} to the index set returned by Algorithm~\ref{alg:aca} as heuristic method for solving \eqref{eq:maxvol}.
The resulting  procedure is ensured to return a locally optimal principal submatrix of $A$ --- in the sense of Section~\ref{sec:maxvol} --- whose volume is larger or equal than the one returned by ACA. For completeness, we report the method in Algorithm~\ref{alg:mvol}. 

By denoting with $\mathsf{it}$ the number of iterations performed by \textsc{local\_maxvol}, we have that the computational cost of Algorithm~\ref{alg:mvol} is $\mathcal O((r+c_A) (r+\mathsf{it})n)$. 

We also show that it is possible to provide an upper bound for $\mathsf{it}$ that does not depend on $n$. Finding the maximum volume submatrix of an SPSD matrix is in one to one correspondence with selecting the columns of maximum volume in its Cholesky factor $T_A$ such that $A=T_A^\top T_A$ \cite[Section 2.1.1]{cortinovis2020maximum}. In particular, the greedy algorithm for column selection, i.e. the partial QR with column pivoting, executed on $T_A$ returns the same index set $J_{\mathsf{aca}}$ identified by \textsc{aca}($A$, $r$), as they are both based on greedy unit augmentations of the index set. Moreover, the volume of $T_A(:,J_{\mathsf{aca}})$  is at least $(r!)^{-1}$ times the maximum volume achievable with a subset of $r$ columns \cite[Theorem 11]{ccivril2009selecting} and is equal to the square root of $\det(A(J_{\mathsf{aca}},J_{\mathsf{aca}}))$. Then, we have
\[
\det(A(J_{\mathsf{aca}},J_{\mathsf{aca}}))\geq \frac{\det(A(J_{\mathsf{best}},J_{\mathsf{best}}))}{(r!)^2},
\]
where $A(J_{\mathsf{best}},J_{\mathsf{best}})$ denotes the maximum volume $r\times r$ submatrix. This means that when calling \textsc{local\_maxvol} in Algorithm~\ref{alg:maxvol}, the volume cannot be increased more than a factor $(r!)^2$. Since each iteration of \textsc{local\_maxvol} increases the volume of at least a factor $1+\mathsf{tol}$, this yields the following bound on its number of iterations:
\[
(1+\mathsf{tol})^{\mathsf{it}}\leq (r!)^2\quad \Longrightarrow\quad \mathsf{it}\leq 2\frac{\log(r!)}{\log(1+\mathsf{tol})}.
\] 
Finally, by means of the Stirling's approximation, we get  $\mathsf{it}\leq 2\frac{(r+1)\log(r)-r+1}{\log(1+\mathsf{tol})}=\mathcal O(r\log(r))$.

\noindent
\begin{minipage}[t]{.48\linewidth}
	\begin{algorithm}[H] 
		\small 
		\caption{}\label{alg:mvol}
		\begin{algorithmic}[1]
			\Procedure{maxvol}{$A,r,\mathsf{tol}$}
			\State $J=$ \Call{ACA}{$A,r$}
			\State $J\gets$ \Call{local\_maxvol}{$A,J,\mathsf{tol}$}
			\EndProcedure
		\end{algorithmic}
	\end{algorithm} 
\end{minipage}
\begin{minipage}[t]{.48\linewidth}
	\begin{algorithm}[H] 
		\small 
		\caption{}\label{alg:mvol_ratio}
		\begin{algorithmic}[1]
			\Procedure{maxvol\_ratio}{$A,B,r,\mathsf{tol}$}
			\State $J=$ \Call{ACA\_ratio}{$A,B,r$}
			\State $J\gets$ \Call{local\_maxvol\_ratio}{$A,B,J,\mathsf{tol}$}
			\EndProcedure
		\end{algorithmic}
	\end{algorithm} 
\end{minipage}
\subsection{Algorithms for maximizing the ratio of volumes }\label{sec:max_ratio}
Let $J=\{j_1,\dots,j_r\}$  be the index set at the current iteration  of either Algorithm~\ref{alg:aca} or Algorithm~\ref{alg:maxvol}. The two algorithms compute the gain factor $\det(A(\widehat J,\ \widehat J))/\det(A(J,\ J))\vphantom{\widehat{E}}$ for all the modifications $\widehat J\in \mathcal J_{\mathrm{aca}}$ and $\widehat J\in \mathcal J_{\mathrm{lmvol}}$, respectively, where
$$
\mathcal J_{\mathrm{aca}}=\{\widehat J\subset \{1,\dots,n\}:\ J\subset \widehat J,\ |\widehat J| = r+1\},\ \  \mathcal J_{\mathrm{lmvol}}=\{\widehat J\subset \{1,\dots,n\}:\ |J\cap \widehat J|=r-1,\ |\widehat J| = r\}
.$$ Therefore, Algorithm~\ref{alg:aca} and Algorithm~\ref{alg:maxvol} can be adapted for the ratio of volume problem \eqref{eq:maxvol_ratio} with the following idea: run in parallel the procedure for the matrices $A,B$ and then identify the maximum ratio of gain factors $$\frac{\det(A(\widehat J,\ \widehat J))\det(B(J,\ J))}{\det(A(J,\ J))\det(B(\widehat J,\ \widehat J))\vphantom{\widehat{E^{(B)}}}}\qquad \forall \widehat J\in\mathcal J_{\mathrm{aca}}\text{ or } \forall \widehat J\in\mathcal J_{\mathrm{lmvol}}.$$
For instance, the extension of ACA to \eqref{eq:maxvol_ratio} looks for $\arg\max_j (R_J^{(A)})_{jj}/\vphantom{\widehat{E^{(B)}}}(R_J^{(B)})_{jj}$ when choosing the next pivot element; see Algorithm~\ref{alg:aca_ratio}. 
Analogously, the version of Algorithm~\ref{alg:maxvol} which deals with the ratio of volumes, identifies the pair of indices $(h,i)$ which maximizes $\mathcal V_{hi}^{(A)}/\mathcal V_{hi}^{(B)}$.  We refer to the latter with \textsc{local\_maxvol\_ratio} and --- 
due to its length --- we refrain to write its pseudocode. Finally, the extension of Algorithm~\ref{alg:mvol} to \eqref{eq:maxvol_ratio} is reported in Algorithm~\ref{alg:mvol_ratio}.

By denoting with $\mathsf{it}$ the number of iterations performed by \textsc{local\_maxvol\_ratio}, we have that the computational cost of Algorithm~\ref{alg:mvol_ratio} is $\mathcal O((r+c_A+c_B) (r+\mathsf{it})n)$, where $c_B$ indicates the cost of evaluating one entry of $B$. 
\subsection{Numerical results}\label{sec:test1}
Algorithms~\ref{alg:aca}--\ref{alg:mvol_ratio} have been implemented in Matlab version R2020a and all the numerical tests in this work have been executed on a Laptop
with the dual-core Intel Core i7-7500U 2.70 GHz CPU, 256 KB of level 2 cache, and 16 GB of RAM. The parameter $\mathsf{tol}$ used in Algorithm~\ref{alg:mvol} and Algorithm~\ref{alg:mvol_ratio} has been set to $5\cdot 10^{-2}$ for all the experiments reported in this manuscript. In the numerical tests involving the test matrix $A_3$ and Algorithm~\ref{alg:maxvol} the updating mechanism has been switched off by setting $\mathsf{do\_update}$ to false. Everywhere else, $\mathsf{do\_update}$ has been set to true.

The code is freely available at \url{https://github.com/numpi/max-vol}.
\begin{paragraph}{Test matrices}
	Let us define five  SPSD matrices $A_1,A_2,A_3,A_4,A_5\in\mathbb R^{n\times n}$ which are involved in the numerical experiments that we are going to present:
	\begin{itemize}
		\item $(A_1)_{ij}:=\mathrm{exp}(-0.3 \ |i-j| / n)$,
		\item $(A_2)_{ij}:= \min\{i,j\}$,
		\item $(A_3)_{ij}:=\frac{1}{i+j-1}$ (Hilbert matrix),
		\item $A_4:= \mathrm{trid}(1,1,1)\otimes \mathsf{Id}_6+\mathsf{Id}_{\frac n6}\otimes  \mathrm{trid}(-0.34,1.7,-0.34)$, 
		\item $A_5:=Q\diag(d)Q^\top$,\ \  $d_{i}:=\rho^{i-1}$, $\rho\in(0,1)$,  and $Q$ is the eigenvector matrix of $\mathrm{trid}(-1,2,-1)$,
	\end{itemize}
	with $\otimes$ indicating the Kronecker product. The aforementioned test matrices are representative of various singular values distributions. $A_1,A_2$ have a subexponential decay, $A_3,A_5$ have an exponential decay and $A_4$, taken from \cite{haber2016sparse}, is banded and well conditioned.  We also indicate with $T_4^\top T_4=A_4$ the Cholesky factorization of $A_4$. When running the numerical algorithms, the matrices $A_1,A_2,A_3$ and $A_4$ are provided as function handles. Instead, the matrix $A_5$ is formed explicitly. 
\end{paragraph}
\begin{paragraph}{Test 1.}
	As first experiment we run Algorithm~\ref{alg:aca} and Algorithm~\ref{alg:mvol} on $A_1,A_2,A_3$, by setting $n=1020$ and varying the size $r$ of the sought submatrix. For the matrices $A_1,A_2$ we let $r$ to range in $\{1,\dots,100\}$. When experimenting on $A_3$ we  consider $r\in\{1,\dots,20\}$ because of the small numerical rank of the Hilbert matrix. We measure the timings required by the two methods and the gain factor $|\det(A(J_{\mathrm{maxvol}}, J_{\mathrm{maxvol}}))/ \det(A(J_{\mathrm{aca}}, J_{\mathrm{aca}}))|$ which Algorithm~\ref{alg:mvol} provides with respect to Algorithm~\ref{alg:aca}. From the results reported in Figure~\ref{fig:aca-maxvol}, we see that the costs of both algorithms scale quadratically with respect to the parameter $r$. For small values of $r$ \textsc{maxvol} struggles to increase the volume of the submatrix returned by $\textsc{aca}$. This happen more often and more consistently for larger values of $r$. We mention that disabling the updates based on the Woodbury identity generally increases of about $20$\% the timings of Algorithm~\ref{alg:mvol} for this test.  
	
	\begin{figure}
		\begin{minipage}{.49\linewidth}
			\begin{tikzpicture}
			\begin{semilogyaxis}[xlabel = $r$, ylabel = Time (s),width=\linewidth,
			height=1\linewidth,legend pos = south east]
			\addplot+[mark size=1pt, mark = o] table[x index = 0, y index = 1]
			{maxvol.dat};
			\addplot+[mark size=1pt, mark = square] table[x index = 0, y index = 2]
			{maxvol.dat};		
			\addplot[domain=1:100, dashed]{1e-5*x^2};	 		
			\legend{\textsc{aca}, \textsc{maxvol},$\mathcal O(r^2)$}
			\end{semilogyaxis}
			\end{tikzpicture}
			\begin{tikzpicture}
			\begin{semilogyaxis}[xlabel = $r$, ylabel = Time (s),width=\linewidth,
			height=1\linewidth,legend pos = south east]
			\addplot+[mark size=1pt, mark = o] table[x index = 0, y index = 1]
			{maxvol_hilb.dat};
			\addplot+[mark size=1pt, mark = square] table[x index = 0, y index = 2]
			{maxvol_hilb.dat};		
			\addplot[domain=1:20, dashed]{1e-5*x^2};	 		
			\legend{\textsc{aca}, \textsc{maxvol},$\mathcal O(r^2)$}
			\end{semilogyaxis}
			\end{tikzpicture}
		\end{minipage}~\begin{minipage}{.49\linewidth}
			\begin{tikzpicture}
			\begin{semilogyaxis}[xlabel = $r$, ylabel = Time (s),width=\linewidth,
			height=1\linewidth,legend pos = south east]
			\addplot+[mark size=1pt, mark = o] table[x index = 0, y index = 4]
			{maxvol.dat};
			\addplot+[mark size=1pt, mark = square] table[x index = 0, y index = 5]
			{maxvol.dat};		
			\addplot[domain=1:100, dashed]{1e-5*x^2};	 		
			\legend{\textsc{aca}, \textsc{maxvol},$\mathcal O(r^2)$}
			\end{semilogyaxis}
			\end{tikzpicture}
			
			\qquad
			\begin{tikzpicture}
			\begin{axis}[xlabel = $r$, ylabel =Gain factor,width=\linewidth,
			height=1\linewidth,legend pos = north west]
			\addplot+[mark size=1pt] table[x index = 0, y index = 3]
			{maxvol.dat};
			\addplot+[mark size=1pt] table[x index = 0, y index = 6]
			{maxvol.dat};		
			\addplot+[mark size=1pt] table[x index = 0, y index = 3]
			{maxvol_hilb.dat};	
			\legend{$A_1$, $A_2$,$A_3$}
			\end{axis}
			\end{tikzpicture}
		\end{minipage}
		\caption{Timings of Algorithm~\ref{alg:aca} and Algorithm~\ref{alg:mvol} on the test matrices $A_1$ (top-left), $A_2$ (top-right), $A_3$ (bottom-left) and measured gain factors (bottom-right).}\label{fig:aca-maxvol}
	\end{figure}
\end{paragraph}
\begin{paragraph}{Test 2.}
	The second numerical test considers maximizing the ratio of volumes \eqref{eq:maxvol_ratio}. We keep $n=1020$ and  we run Algorithm~\ref{alg:aca_ratio} and Algorithm~\ref{alg:mvol_ratio} using $A_1,A_2,A_3$, as numerator and $A_4$ as denominator.  The time consumption as the size $r$ of the submatrix increases is reported  Figure~\ref{fig:aca-maxvol-ratio}.  Also in this case, quadratic complexity with respect to $r$ is observed for the computational cost. The gain factor $|\det(A(J_{\mathrm{maxvol\_ratio}}, J_{\mathrm{maxvol\_ratio}}))/$ $ \det(A(J_{\mathrm{aca\_ratio}}, J_{\mathrm{aca\_ratio}}))|$ is shown as well in the bottom right part of Figure~\ref{fig:aca-maxvol-ratio}.
	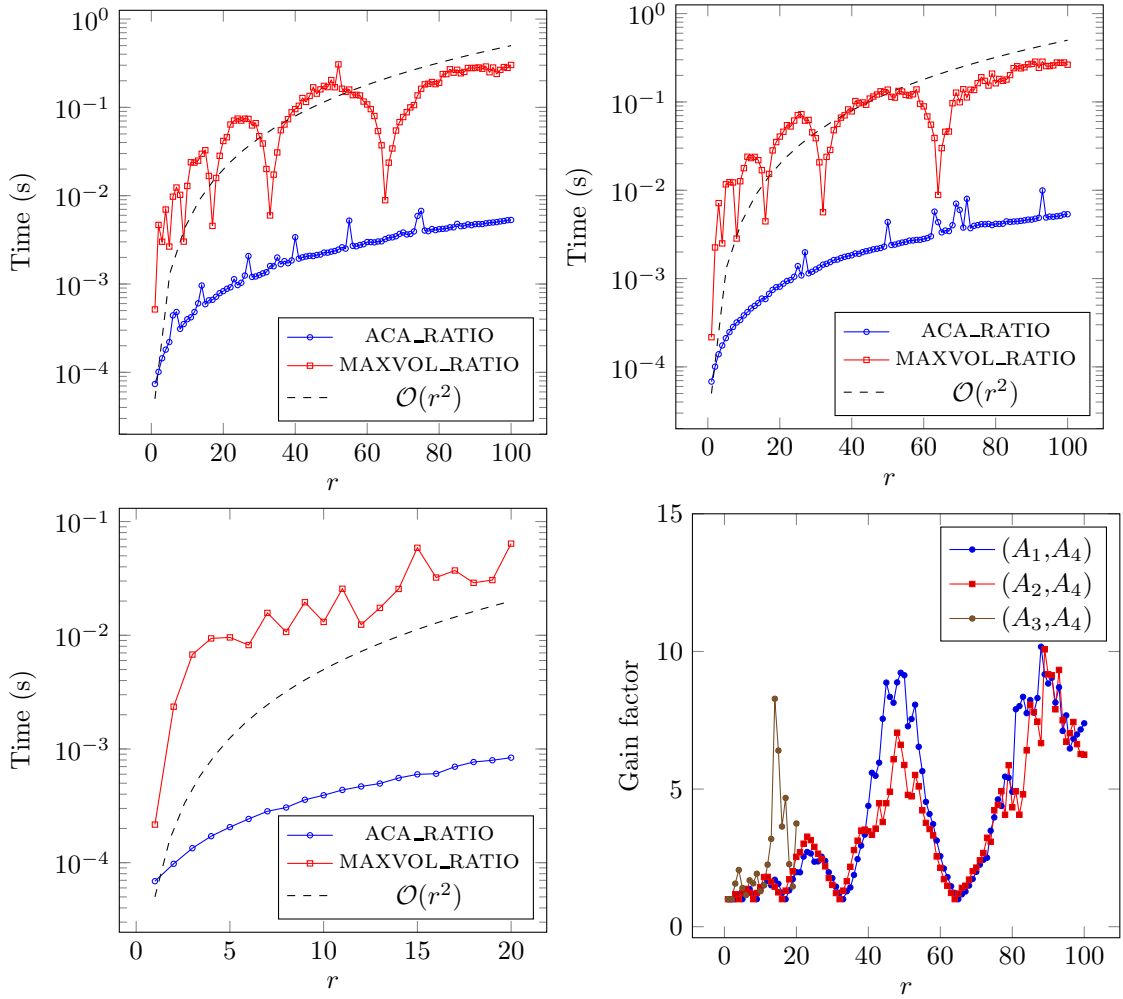
\begin{figure}
		\begin{minipage}{.49\linewidth}
			\begin{tikzpicture}
			\begin{semilogyaxis}[xlabel = $r$, ylabel = Time (s),width=\linewidth,
			height=1\linewidth,legend pos = south east]
			\addplot+[mark size=1pt, mark = o] table[x index = 0, y index = 1]
			{maxvol_ratio.dat};
			\addplot+[mark size=1pt, mark = square] table[x index = 0, y index = 2]
			{maxvol_ratio.dat};		
			\addplot[domain=1:100, dashed]{5e-5*x^2};	 		
			\legend{\textsc{aca\_ratio}, \textsc{maxvol\_ratio},$\mathcal O(r^2)$}
			\end{semilogyaxis}
			\end{tikzpicture}
			\begin{tikzpicture}
			\begin{semilogyaxis}[xlabel = $r$, ylabel = Time (s),width=\linewidth,
			height=1\linewidth,legend pos = south east]
			\addplot+[mark size=1pt, mark = o] table[x index = 0, y index = 1]
			{maxvol_ratio_hilb.dat};
			\addplot+[mark size=1pt, mark = square] table[x index = 0, y index = 2]
			{maxvol_ratio_hilb.dat};		
			\addplot[domain=1:20, dashed]{5e-5*x^2};	 		
			\legend{\textsc{aca\_ratio}, \textsc{maxvol\_ratio},$\mathcal O(r^2)$}
			\end{semilogyaxis}
			\end{tikzpicture}
		\end{minipage}~\begin{minipage}{.49\linewidth}
			\begin{tikzpicture}
			\begin{semilogyaxis}[xlabel = $r$, ylabel = Time (s),width=\linewidth,
			height=1\linewidth,legend pos = south east]
			\addplot+[mark size=1pt, mark = o] table[x index = 0, y index = 4]
			{maxvol_ratio.dat};
			\addplot+[mark size=1pt, mark = square] table[x index = 0, y index = 5]
			{maxvol_ratio.dat};		
			\addplot[domain=1:100, dashed]{5e-5*x^2};	 		
			\legend{\textsc{aca\_ratio}, \textsc{maxvol\_ratio},$\mathcal O(r^2)$}
			\end{semilogyaxis}
			\end{tikzpicture}
			
			\qquad
			\begin{tikzpicture}
			\begin{axis}[xlabel = $r$, ylabel =Gain factor,width=\linewidth,
			height=1\linewidth,legend pos = north east, ymax=15]
			\addplot+[mark size=1pt] table[x index = 0, y index = 3]
			{maxvol_ratio.dat};
			\addplot+[mark size=1pt] table[x index = 0, y index = 6]
			{maxvol_ratio.dat};		
			\addplot+[mark size=1pt] table[x index = 0, y index = 3]
			{maxvol_ratio_hilb.dat};	
			\legend{$(A_1\text{,}A_4)$, $(A_2\text{,}A_4)$, $(A_3\text{,}A_4)$}
			\end{axis}
			\end{tikzpicture}
		\end{minipage}
		\caption{Timings of Algorithm~\ref{alg:aca_ratio} and Algorithm~\ref{alg:mvol_ratio} on the test matrices $(A_1, A_4)$ (top-left), $(A_2, A_4)$ (top-right), $(A_3,A_4)$ (bottom-left) and measured gain factors (bottom-right).}\label{fig:aca-maxvol-ratio}
	\end{figure}
\end{paragraph}
\begin{paragraph}{Test 3.}
	Let us test the computational cost of \textsc{aca}, \textsc{maxvol}, \textsc{aca\_ratio} and \textsc{maxvol\_ratio} as the size of the target matrices increases. We fix $r=40$ and we let $n= 1020\cdot 2^t$, $t=0,\dots, 10$. Then, we run  \textsc{aca}, \textsc{maxvol} on $A_1$ and \textsc{maxvol}, \textsc{aca\_ratio} on the pair $(A_1, A_4)$. The timings reported in Figure~\ref{fig:aca_times} confirm that the computational time scales linearly with respect to $n$.
	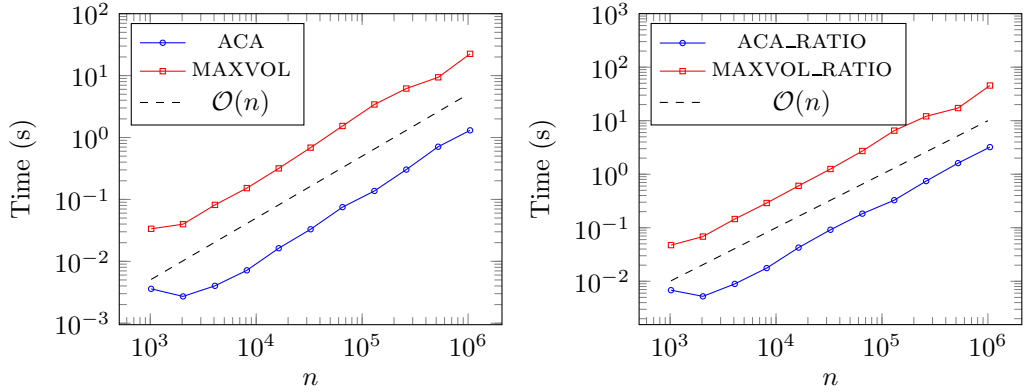
\begin{figure}
		\begin{tikzpicture}
		\begin{loglogaxis}[xlabel = $n$, ylabel = Time (s),width=.45\linewidth,legend pos = north west, ymax = 1e2]
		\addplot+[mark size=1pt, mark = o] table[x index = 0, y index = 1]
		{aca_times.dat};
		\addplot+[mark size=1pt, mark = square] table[x index = 0, y index = 2]
		{aca_times.dat};		
		\addplot[domain=1020:1e6, samples =20, dashed]{5e-6*x};	 		
		\legend{\textsc{aca}, \textsc{maxvol},$\mathcal O(n)$}
		\end{loglogaxis}
		\end{tikzpicture}~
		\begin{tikzpicture}
		\begin{loglogaxis}[xlabel = $n$, ylabel = Time (s),width=.45\linewidth,legend pos = north west, ymax = 1e3]
		\addplot+[mark size=1pt, mark = o] table[x index = 0, y index = 3]
		{aca_times.dat};
		\addplot+[mark size=1pt, mark = square] table[x index = 0, y index = 4]
		{aca_times.dat};		
		\addplot[domain=1020:1e6, samples = 20, dashed]{1e-5*x};	 		
		\legend{\textsc{aca\_ratio}, \textsc{maxvol\_ratio},$\mathcal O(n)$}
		\end{loglogaxis}
		\end{tikzpicture}
		\caption{Computational times of the algorithms as $n$ increases for $r=40$. On the left \textsc{aca} and \textsc{maxvol} have been run on the matrix $A_1$. On the right \textsc{aca\_ratio} and \textsc{maxvol\_ratio} have been run on the pair of matrices $(A_1,A_4)$. }\label{fig:aca_times}
	\end{figure}
\end{paragraph}
\begin{paragraph}{Test 4.}
	Finally, we test the quality of the cross approximations returned by \textsc{aca\_ratio} and \textsc{maxvol\_ratio}. More specifically, we compute  the approximation error $\norm{E_i - (E_i)_J }_2$, $i=1,2,3,5$, with $E_i:=(T_4^\top)^{-1}A_iT_4^{-1}$, $n =1020$ and  $J$ chosen as either $J_{\mathrm{aca\_ratio}}$ or $J_{\mathrm{maxvol\_ratio}}$. In Figure~\ref{fig:aca-maxvol-ratio-error} we compare the error curves, as $r$ increases, of the cross approximations with the ones associated with the truncated SVD, which represents the best attainable scenario. We see that the decay rate of the error of \textsc{aca\_ratio} is pretty similar to the one of the truncated SVD. \textsc{maxvol\_ratio} performs also well on the matrices which have a fast decay of the singular values, i.e., $A_3,A_5$. However, its convergence deteriorates for the matrices $A_1$ and $A_2$ and the associated error is worse than the one of \textsc{aca\_ratio}. It turns out that in these cases the approximation given in \eqref{eq:ratio-det} is less accurate and the submatrix of $(T_4^\top)^{-1}A_iT_4^{-1}$ corresponding to $J_{\mathrm{aca\_ratio}}$ has  a larger volume than the one corresponding to $J_{\mathrm{maxvol\_ratio}}$.
\end{paragraph}

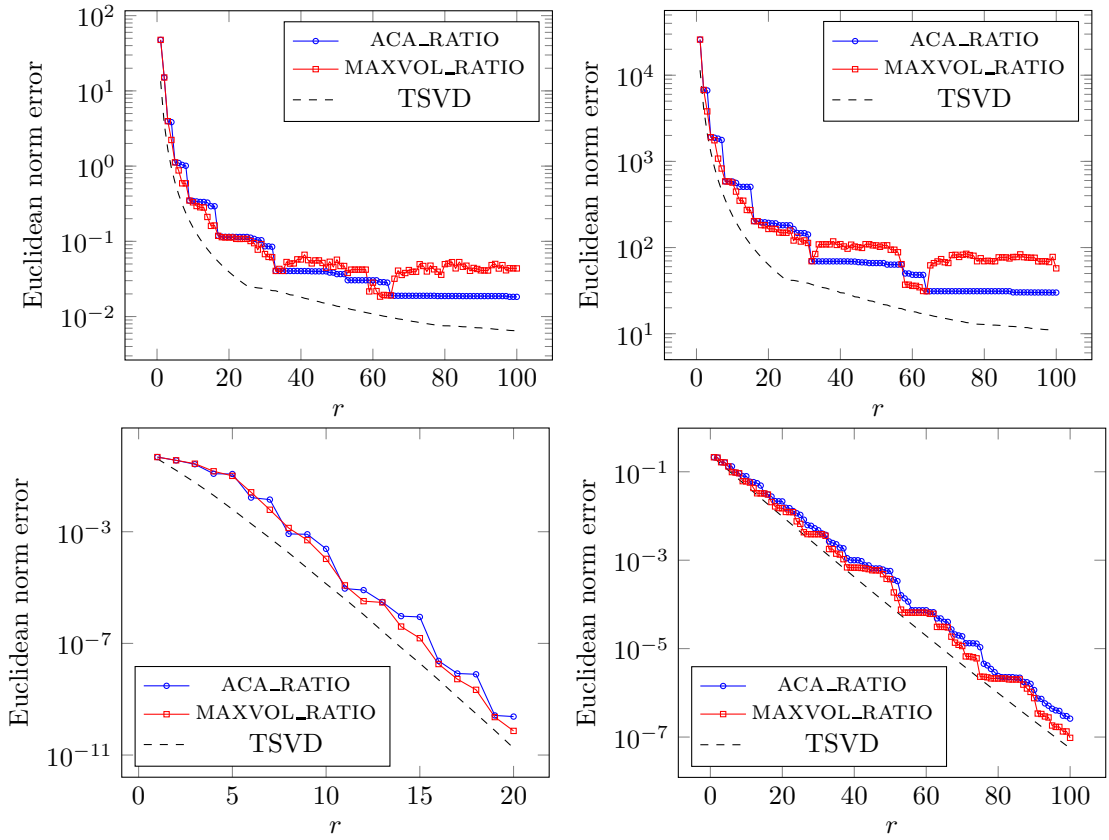
\begin{figure}
	\centering
	\begin{tikzpicture}
	\begin{semilogyaxis}[xlabel = $r$, ylabel =Euclidean norm error,width=.49\linewidth,
	legend pos = north east]
	\addplot+[mark size=1pt, mark = o] table[x index = 0, y index = 7]
	{maxvol_ratio.dat};
	\addplot+[mark size=1pt, mark = square] table[x index = 0, y index = 9]
	{maxvol_ratio.dat};		
	\addplot+[mark = none, dashed, black] table[x index = 0, y index = 11]
	{maxvol_ratio.dat}; 		
	\legend{\textsc{aca\_ratio}, \textsc{maxvol\_ratio},TSVD}
	\end{semilogyaxis}
	\end{tikzpicture}~
	\begin{tikzpicture}
	\begin{semilogyaxis}[xlabel = $r$, ylabel = Euclidean norm error,width=.49\linewidth,
	legend pos = north east]
	\addplot+[mark size=1pt, mark = o] table[x index = 0, y index = 8]
	{maxvol_ratio.dat};
	\addplot+[mark size=1pt, mark = square] table[x index = 0, y index = 10]
	{maxvol_ratio.dat};		
	\addplot+[mark = none, dashed, black] table[x index = 0, y index = 12]
	{maxvol_ratio.dat}; 		
	\legend{\textsc{aca\_ratio}, \textsc{maxvol\_ratio},TSVD}
	\end{semilogyaxis}
	\end{tikzpicture}
	\begin{tikzpicture}
	\begin{semilogyaxis}[xlabel = $r$, ylabel = Euclidean norm error,width=.49\linewidth,
	legend pos = south west]
	\addplot+[mark size=1pt, mark = o] table[x index = 0, y index = 4]
	{maxvol_ratio_hilb.dat};
	\addplot+[mark size=1pt, mark = square] table[x index = 0, y index = 5]
	{maxvol_ratio_hilb.dat};		
	\addplot+[mark = none, dashed, black] table[x index = 0, y index = 6]
	{maxvol_ratio_hilb.dat}; 		
	\legend{\textsc{aca\_ratio}, \textsc{maxvol\_ratio},TSVD}
	\end{semilogyaxis}
	\end{tikzpicture}~		\begin{tikzpicture}
	\begin{semilogyaxis}[xlabel = $r$, ylabel = Euclidean norm error,width=.49\linewidth,
	legend pos = south west]
	\addplot+[mark size=1pt, mark = o] table[x index = 0, y index = 1]
	{exp_maxvol_ratio.dat};
	\addplot+[mark size=1pt, mark = square] table[x index = 0, y index = 2]
	{exp_maxvol_ratio.dat};		
	\addplot+[mark = none, dashed, black] table[x index = 0, y index = 3]
	{exp_maxvol_ratio.dat}; 		
	\legend{\textsc{aca\_ratio}, \textsc{maxvol\_ratio},TSVD}
	\end{semilogyaxis}
	\end{tikzpicture}
	\caption{Approximation of $(T_4^\top)^{-1}A_iT_4^{-1}$ for $i=1$ (top-left), $i=2$ (top-right), $i=3$ (bottom-left), and $i=5$ (bottom-right), by means of the cross approximations associated with the outcome of Algorithm~\ref{alg:aca_ratio} and Algorithm~\ref{alg:mvol_ratio}. All plots report the  lower bound given by the error of the truncated SVD. The size of the matrices is $n=1020$. }\label{fig:aca-maxvol-ratio-error}
\end{figure}
\section{Quasi optimal cross approximation in the nuclear norm}
Adaptive cross approximation has a much lower cost than computing the truncated  SVD for the low-rank matrix approximation, although the latter provides an optimal solution, in any unitarily invariant norm. Empirically, ACA typically returns an approximant that is close, in terms of the associated approximation error, to the truncated SVD. However, it appears difficult to ensure this property theoretically, e.g., see the quite pessimistic bounds in \cite{higham2002accuracy,harbrecht2012low,cortinovis2020maximum}.
On the other hand, there are some recent results about cross approximations with quasi optimal approximation error.

Zamarashkin and Osinsky proved in \cite[Theorem 1]{zamarashkin2018existence} that, given $A\in\mathbb C^{m\times n}$ of rank $k$, $\forall r=1,\dots, k$ there exist $I=\{i_1,\dots,i_r\}\subset \{1,\dots,m\}$ and $J=\{j_1,\dots, j_r\}\subset\{1,\dots, n\}$,  such that $A(I,\ J)$ is invertible and
\begin{equation}\label{eq:quasi-frob}
\norm{A-A_{IJ}}_F\leq (r+1)\sqrt{\sum_{s\geq r+1}\sigma_s^2}, \qquad A_{IJ}:=A(:,\ J)A(I,\ J)^{-1}A(I,\ :).
\end{equation}
The authors of  \cite{zamarashkin2018existence} uses a probabilistic argument: they define the probability measure
$$
\mathbb P(A(I,\ J)) = \frac{\mathcal V(A(I,\ J))^2}{\sum\limits_{|\widehat I|=|\widehat J|=r}\mathcal V(A(\widehat I,\ \widehat J))^2}
$$
on the set  of $r\times r$ submatrices of $A$. Then, they show that $\mathbb E[\norm{A-A_{IJ}}_F]\leq (r+1)\sqrt{\sum_{s\geq r+1}\sigma_s^2}$,  which implies that there exists at least one choice of $I,J$ that verifies \eqref{eq:quasi-frob}.

Cortinovis and Kressner proposed in \cite{cortinovis2019low} a polynomial time algorithm to find $I$ and $J$ such that $A_{IJ}$ is  quasi optimal with respect to the Frobenius norm. Their approach, inspired by \cite{deshpande2010efficient}, is based on the derandomization of the result by Zamarashkin and Osinsky with the method of conditional expectations. More precisely, let $t\leq r$ and assuming to have already selected the first $t-1$ indices  $\{i_1,\dots,i_{t-1}\},\{j_1,\dots,j_{t-1}\}$  of $I$ and $J$, the pair $(i_t,j_t)$ is chosen as the one which minimizes
\begin{equation}\label{eq:cond-exp}
\mathbb E[\norm{A-A_{IJ}}_F\ | \ i_1,\dots,i_t,\ j_1,\dots,j_t].
\end{equation}
Incrementally selecting all the indices with this criteria ensures that $(I,J)$ identifies a cross approximation which verifies \eqref{eq:quasi-frob}. 
Interestingly, \eqref{eq:cond-exp} can be shown to be $(r-t+1)$ times the ratio of two consecutive coefficients in the characteristic polynomial of the symmetrized residual matrix $R_{I_tJ_t} :=(A-A_{I_tJ_t})(A-A_{I_tJ_t})^*$, with $I_t:=\{i_1,\dots,i_t\}$ and $J_t:=\{j_1,\dots,j_t\}$. The algorithm in \cite{cortinovis2019low} computes the coefficients of the characteristic polynomial of $R_{I_tJ_t}$ for all possible choices of $i_t$ and $j_t$ by updating the characteristic polynomial of $R_{I_{t-1}J_{t-1}}$; then, it chooses the pair of indices which minimizes the aforementioned ratio.

In the next section, we analyze what can be achieved with cross approximations built on principal submatrices, when $A$ is SPSD. 

\subsection{Existence result}\label{sec:certified} 
In view of \cite[Theorem 1]{cortinovis2020maximum} it is tempting to replace a symmetric choice of indices $I=J$ in \eqref{eq:quasi-frob} when $A$ is SPSD. However, such error bound it is not true in general and it is not possible to get rid of the dependency on $n$ in the multiplicative constant. For instance, consider $A=E+\epsilon\cdot \mathsf{Id}$ for a small $\epsilon> 0$ and with $E$ denoting the matrix of all ones; then, for the rank $1$ approximation of $A$, the error of the truncated SVD is $\epsilon\sqrt{n-1}$ while the one associated with any symmetric cross approximation is approximately $\epsilon(n-1)$.  The following result shows that a quasi optimal error in the nuclear norm can be  obtained by restricting the search space to principal submatrices. In view of the previous remark, this yields a sharp quasi optimal error in the Frobenius norm, with a constant increased by a factor $\sqrt{n-r}$.
\begin{theorem}\label{thm:main}
	Let $A\in\mathbb R^{n\times n}$ be SPSD of rank $k$ and $r\in\{1,\dots, k\}$. Then,
	there exists a subset of indices $J^*\subset\{1,\dots, n\}$, $|J^*|=r$ such that $A(J^*,\ J^*)$ is invertible and
	\begin{equation}\label{eq:certified}
	\norm{A-A_{J^*}}_*\leq (r+1)\cdot \sum_{s\geq r+1}\sigma_s(A),\quad \text{and}\quad \norm{A-A_{J^*}}_F\leq \sqrt{n-r}\cdot (r+1)\cdot \sqrt{\sum_{s\geq r+1}\sigma_s(A)^2}.
	\end{equation}

\end{theorem}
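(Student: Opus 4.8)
The plan is to adapt the probabilistic argument of Zamarashkin and Osinsky \cite{zamarashkin2018existence}, but to run \emph{volume sampling} on a symmetric square-root factor of $A$ and to measure the residual in the nuclear norm, where the relevant expectation can be computed \emph{exactly}.

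First I would fix a factorization $A = B^\top B$ with $B = [b_1,\dots,b_n]\in\mathbb R^{k\times n}$ of full row rank (e.g.\ from the reduced eigendecomposition of $A$). Then, for any $J$ with $|J|=r$, one has $A(J,J)=B(:,J)^\top B(:,J)$, so $\det(A(J,J))$ equals the squared volume $\mathrm{vol}^2(b_{j_1},\dots,b_{j_r})$ of the parallelepiped spanned by the selected columns of $B$, and $A_J = B^\top P_J B$ where $P_J$ is the orthogonal projector onto $\mathrm{span}\{b_j:j\in J\}$. Consequently $R_J := A-A_J = B^\top(\mathsf{Id}_k-P_J)B$ is SPSD, and because the nuclear norm of an SPSD matrix is its trace,
\[
\norm{A-A_J}_* = \trace(R_J) = \sum_{i=1}^n \mathrm{dist}^2\bigl(b_i,\ \mathrm{span}\{b_j:j\in J\}\bigr).
\]

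Next I would equip the collection of $r$-subsets with the volume-sampling measure $\mathbb P(J):=\det(A(J,J))/Z_r$, where $Z_r:=\sum_{|\widehat J|=r}\det(A(\widehat J,\widehat J))$; summing all $r\times r$ principal minors of $A$ gives the $r$-th elementary symmetric function of its eigenvalues, $Z_r=e_r(\sigma_1(A),\dots,\sigma_n(A))>0$, positivity using $r\le k$. Invoking the ``base $\times$ height'' identity $\mathrm{vol}^2(b_J)\cdot\mathrm{dist}^2(b_i,\mathrm{span}(b_J)) = \det(A(J\cup\{i\},J\cup\{i\}))$ for $i\notin J$ (the distance being $0$ for $i\in J$), the expectation of the displayed quantity telescopes:
\[
\mathbb E\bigl[\norm{A-A_J}_*\bigr] = \frac1{Z_r}\sum_{i=1}^n\sum_{|J|=r}\det(A(J,J))\,\mathrm{dist}^2(b_i,\mathrm{span}(b_J)) = \frac{r+1}{Z_r}\sum_{|J'|=r+1}\det(A(J',J')) = (r+1)\,\frac{Z_{r+1}}{Z_r}.
\]
To close the nuclear-norm estimate I would bound $Z_{r+1}/Z_r=e_{r+1}(\sigma(A))/e_r(\sigma(A))\le\sum_{s\ge r+1}\sigma_s(A)$: for $\lambda_1\ge\cdots\ge\lambda_n\ge0$ every $(r+1)$-subset $S$ contains some index $m(S)\ge r+1$, the map $S\mapsto(m(S),S\setminus\{m(S)\})$ is injective into $\{r+1,\dots,n\}\times\{T:|T|=r\}$, and hence $e_{r+1}(\lambda)\le\bigl(\sum_{s\ge r+1}\lambda_s\bigr)e_r(\lambda)$. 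Then $\mathbb E[\norm{A-A_J}_*]\le(r+1)\sum_{s\ge r+1}\sigma_s(A)$, so some $J^*$ attains this bound; since $\mathbb P(J^*)>0$ we get $\det(A(J^*,J^*))>0$, i.e.\ $A(J^*,J^*)$ is invertible. The Frobenius estimate in \eqref{eq:certified} then follows at once from $\norm{A-A_{J^*}}_F\le\norm{A-A_{J^*}}_*$ together with the Cauchy--Schwarz bound $\sum_{s\ge r+1}\sigma_s(A)\le\sqrt{n-r}\sqrt{\sum_{s\ge r+1}\sigma_s(A)^2}$, valid since $A$ has at most $k-r\le n-r$ singular values past the $r$-th.

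The step I expect to be the main obstacle is the exact evaluation of $\mathbb E[\norm{A-A_J}_*]$: it goes through precisely because the weight $\det(A(J,J))$ is chosen so that weight times squared distances recombine into $(r+1)$-dimensional volumes, and because the nuclear norm of the SPSD residual is the \emph{linear} functional $\trace(\cdot)$, which passes through the expectation with no Jensen-type loss — this is exactly why the constant stays $(r+1)$ rather than degrading. The elementary-symmetric-polynomial inequality used to pass from $Z_{r+1}/Z_r$ to $\sum_{s\ge r+1}\sigma_s(A)$ is the only other non-routine ingredient, and it is combinatorially transparent.
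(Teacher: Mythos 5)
Your proof is correct and follows essentially the same route as the paper: volume sampling on principal submatrices with weight $\det(A(J,\ J))$, exact evaluation of the expected nuclear-norm error as $(r+1)\,e_{r+1}(\sigma(A))/e_r(\sigma(A))$, and the bound $e_{r+1}\leq\bigl(\sum_{s\geq r+1}\sigma_s(A)\bigr)e_r$. The only difference is presentational: you express the key identities via Gram determinants of a factor $B$ with $A=B^\top B$ (``base times height'' and distances to $\mathrm{span}\{b_j\}$), whereas the paper states the same facts through Schur complements and coefficients of the characteristic polynomial in its Lemma~\ref{lem:tech1}.
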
 
Before going into the proof of Theorem~\ref{thm:main}, let us state and prove some properties regarding the volume of principal submatrices.
\begin{lemma}\label{lem:tech1}
	Let $A\in\mathbb R^{n\times n}$ be SPSD and $J:=\{j_1,\dots,j_r\}\subset \{1,\dots, n\}$ such that $A(J,\ J)$ is invertible. Then: 
	\begin{itemize}
		\item[$(i)$] $
		\norm{A-A_J}_*=\sum\limits_{|\widehat J|=r+1,J\subset \widehat J}\frac{\mathcal V(A(\widehat J,\ \widehat J))}{\mathcal V(A(J,\ J))},
		$
		\item[$(ii)$] 		$
		\sum\limits_{|J|=r}\mathcal V(A(J,\ J))= \sum\limits_{1\leq j_1<\dots <j_r\leq n}\sigma_{j_1}(A)\cdots\sigma_{j_r}(A),
		$ 
		\item[$(iii)$] for $t\in\{1,\dots,r\}$ and $J_1:=\{j_1,\dots, j_t\}\subset J$
		$$
		\sum_{j_{t+1},\dots,j_{r}}\mathcal V( A(J,\ J))=\mathcal V(A(J_1,\ J_1)) \cdot(r-t)! \cdot c_{n-r+t}(A-A_{J_1}),
		$$
		where $(-1)^{n-r+t}c_{n-r+t}(A-A_{J_1})$ indicates the coefficient which multiplies $z^{n-r+t}$ in the characteristic polynomial of $A-A_{J_1}$.
	\end{itemize}
\end{lemma}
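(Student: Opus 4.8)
The three claims are all exact combinatorial identities about determinants of principal submatrices of an SPSD matrix, and the natural common tool is the Cauchy--Binet formula applied to a Cholesky-type factorization $A = T^\top T$ (or, for a rank-$k$ matrix, a thin factorization $A = X X^\top$ with $X\in\mathbb R^{n\times k}$). For part $(i)$, I would first observe that $R_J := A - A_J$ is SPSD and that its nonzero eigenvalues coincide with those of the Schur complement of $A(J,J)$ in $A$, so $\norm{A-A_J}_* = \trace(R_J) = \sum_{h\notin J} (R_J)_{hh}$. The key point, already used in the proof of the first Lemma of the excerpt, is that each diagonal entry $(R_J)_{hh}$ equals the Schur complement of $A(J,J)$ in $A(\widehat J,\widehat J)$ with $\widehat J = J\cup\{h\}$, and by the Schur determinant formula this equals $\mathcal V(A(\widehat J,\widehat J))/\mathcal V(A(J,J))$ (all quantities are positive, so absolute values are harmless). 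Summing over $h\notin J$ gives exactly the stated sum over $(r+1)$-subsets containing $J$.

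For part $(ii)$, I would write $A = T^\top T$ with $T\in\mathbb R^{k\times n}$ (or $n\times n$, padding with zeros), so that $A(J,J) = T(:,J)^\top T(:,J)$ and by Cauchy--Binet $\mathcal V(A(J,J)) = \det(A(J,J)) = \sum_{|H|=r} \det(T(H,J))^2$. Summing over all $J$ with $|J|=r$ and interchanging the two sums yields $\sum_{|H|=|J|=r}\det(T(H,J))^2$, which by Cauchy--Binet again (applied to $T T^\top$, whose eigenvalues are $\sigma_1^2,\dots,\sigma_k^2$) equals the $r$-th elementary symmetric polynomial in the $\sigma_i^2$ --- wait, one must be careful: the right-hand side of $(ii)$ is the $r$-th elementary symmetric polynomial in the \emph{singular values} $\sigma_{j}(A)$, not their squares. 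The clean route is therefore to use the factor directly: since $A$ is SPSD, $\sigma_j(A)=\lambda_j(A)$, and writing the eigendecomposition $A = Q\Lambda Q^\top$ with $\Lambda = \diag(\sigma_1,\dots,\sigma_n)$, one has $A(J,J) = Q(J,:)\Lambda Q(J,:)^\top$; applying Cauchy--Binet to this product of the three matrices $Q(J,:)$, $\Lambda$, $Q(J,:)^\top$ gives $\det(A(J,J)) = \sum_{|H|=r}\det(Q(J,H))^2 \prod_{h\in H}\sigma_h$. Summing over $|J|=r$ and using $\sum_{|J|=r}\det(Q(J,H))^2 = \det\big((Q^\top Q)(H,H)\big) = \det(\mathsf{Id}_r) = 1$ (orthogonality of $Q$) collapses the sum to $\sum_{|H|=r}\prod_{h\in H}\sigma_h$, which is precisely the claimed identity.

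For part $(iii)$, I would factor out the fixed prefix: by the multiplicativity of the determinant under Schur complementation, $\mathcal V(A(J,J)) = \mathcal V(A(J_1,J_1))\cdot \det\big((A-A_{J_1})(J\setminus J_1,\, J\setminus J_1)\big)$ for each extension $J = J_1\cup\{j_{t+1},\dots,j_r\}$ --- here one uses that the Schur complement of $A(J_1,J_1)$ in $A(J,J)$ is exactly the principal $(r-t)\times(r-t)$ submatrix of the residual $A-A_{J_1}$ indexed by $J\setminus J_1$. Summing over the $(r-t)$-subsets $\{j_{t+1},\dots,j_r\}$ of $\{1,\dots,n\}\setminus J_1$ (the ordered sum in the statement produces the factor $(r-t)!$ relative to the unordered one) reduces the problem to showing that $\sum_{|K|=r-t}\det\big((A-A_{J_1})(K,K)\big)$ equals $c_{n-r+t}(A-A_{J_1})$ up to sign --- but the sum of all principal minors of size $m$ of any matrix $M$ is the coefficient of $z^{n-m}$ in $\det(zI-M)$, which is exactly the definition of $c_{n-m}$ adopted in the statement with $m = r-t$, i.e. $n-m = n-r+t$. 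This last fact is the classical relation between principal minors and characteristic-polynomial coefficients, so no real computation is needed.

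The main obstacle is bookkeeping rather than depth: one must be scrupulous about (a) the distinction between the singular values $\sigma_j$ and their squares when invoking Cauchy--Binet, which is why the eigendecomposition $A=Q\Lambda Q^\top$ is preferable to a Cholesky factorization in part $(ii)$; (b) the ordered-versus-unordered counting that generates the $(r-t)!$ in part $(iii)$; and (c) checking that $A-A_{J_1}$ has the rank structure making the principal-submatrix/Schur-complement identifications valid (this follows since $A-A_{J_1}$ vanishes on rows and columns indexed by $J_1$ and its trailing block is the Schur complement). Everything else is a direct application of Cauchy--Binet, the Schur complement determinant formula, and the definition of the characteristic polynomial.
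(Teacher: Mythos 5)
Your proof is correct and follows essentially the same route as the paper: (i) via the trace of the SPSD residual whose off-$J$ diagonal entries are Schur complements equal to ratios of volumes, (ii) via the identification of the sum of $r\times r$ principal minors with the $r$-th elementary symmetric polynomial of the eigenvalues (= singular values), and (iii) via the Schur-complement factorization $\mathcal V(A(J,J))=\mathcal V(A(J_1,J_1))\det\bigl((A-A_{J_1})(J\setminus J_1,\,J\setminus J_1)\bigr)$ followed by the classical minors/characteristic-polynomial relation, with the $(r-t)!$ accounting for ordered tuples. The only cosmetic difference is that in (ii) you re-derive the classical fact from scratch through Cauchy--Binet applied to the eigendecomposition, whereas the paper simply invokes it as the coefficient $c_{n-r}(A)$ of the characteristic polynomial.
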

\begin{proof}
	\fbox{$(i)$}	Let us remark that in the particular case $J=\{1,\dots, n-1\}$ we have
	$$A=\begin{bmatrix}
	A(J,\ J)& b\\
	b^\top & d	
	\end{bmatrix},\quad 
	A-A_J= \begin{bmatrix}
	0&0\\ 0&d-b^\top A(J,\ J)^{-1}b
	\end{bmatrix}$$
	and specifically:
	\begin{equation}\label{eq:res}
	\norm{A-A_J}_* =\norm{A-A_J}_F=\frac{\mathcal V(A)}{\mathcal V(A(J,\ J))},
	\end{equation}
	where the second equality has been proved in \cite[Lemma 1]{zamarashkin2018existence}.
	If $J$ is generic and $A$ is SPSD, then $A-A_J$ is SPSD and its nuclear norm is the sum of its diagonal entries which are all Schur complements of the form given in \eqref{eq:res}; this yields $(i)$.
	
	\noindent	\fbox{$(ii)$} The volume of a principal submatrix of an SPSD matrix corresponds to its determinant so that $	\sum_{|J|=r}\mathcal V(A(J,\ J))$ is equal $c_{n-r}(A)$. Since the singular values of an SPSD matrix $A$ are equal to its eigenvalues we have $c_{n-r}(A)=\sum_{1\leq j_1<\dots <j_r\leq n}\sigma_{j_1}(A)\cdots\sigma_{j_r}(A)$.  
	
	\noindent \fbox{$(iii)$} Let us denote $B:=A-A_{J_1}$ and $J_2:=J\setminus J_1$. Since $B(J_2,\ J_2)$ is the Schur complement of $A(J, \ J)$ with respect to $A(J_1,\ J_1)$ we have $\mathcal V(A(J, \ J))=\mathcal V(A(J_1,\ J_1))\mathcal V(B(J_2,\ J_2))$ so that
	$$
	\sum_{j_{t+1},\dots,j_{r}}\mathcal V( A(J,\ J))=\mathcal V( A(J_1,\ J_1))\sum_{j_{t+1},\dots,j_{r}}\mathcal V(B(J_2,\ J_2))=\mathcal V( A(J_1,\ J_1)) \cdot(r-t)! \cdot c_{n-r+t}(B),
	$$
	where the factor $(r-t)!$ accounts the repetitions in the choice of $J_2$.
\end{proof}

\begin{proof}[Proof of Theorem~\ref{thm:main}]
	Let us denote by $\Omega_r$  the set of $r\times r$ principal submatrices of $A$. We show that $(r+1)\cdot \sum_{t\geq r+1}\sigma_t(A)$ is larger than the expected value of the cross approximation error, with respect to the following probability distribution on $\Omega_r$:  
	$$
	\mathbb P (A(J,\ J))=\gamma\cdot \mathcal V(A(J,\ J)),\qquad \gamma:=\frac{1}{\sum_{B\in\Omega_r}\mathcal V(B)}.
	$$
	
	Indeed, we have:
	\begin{align*}
	\mathbb E [\norm{A-A_J}_*]&= \sum_{|J|=r}\mathbb P(A(J,\ J))\norm{A-A_J}_*\\
	\text{Lemma}~\ref{lem:tech1}-(i)\qquad&= \sum_{|J|=r}\sum_{|\widehat J|=r+1,J\subset \widehat J}\mathbb P(A(J,\ J))\frac{\mathcal V(A(\widehat J,\ \widehat J))}{\mathcal V(A(J,\ J))}\\
	&= \gamma\sum_{|\widehat J|=r+1}\sum_{|J|=r, J\subset \widehat J}\mathcal V(A(\widehat J,\ \widehat J))\\
	&=\gamma (r+1)\sum_{|\widehat J|=r+1}\mathcal V(A(\widehat J,\ \widehat J))\\
	\text{Lemma}~\ref{lem:tech1}-(ii)\qquad	&=\gamma(r+1)\sum_{1\leq j_1<\dots<j_{r+1}\leq n}\sigma_{j_1}(A)\cdots\sigma_{j_{r+1}}(A)\\
	&=\gamma(r+1)\sum_{1\leq j_1<\dots<j_{r}\leq n}\sigma_{j_1}(A)\cdots\sigma_{j_{r}}(A)\sum_{j_{r+1}>j_r}\sigma_{j_{r+1}}(A)\\
	&\leq\gamma(r+1)(\sigma_{r+1}(A)+\dots+\sigma_n(A))\sum_{1\leq j_1<\dots<j_{r}\leq n}\sigma_{j_1}(A)\cdots\sigma_{j_{r}}(A)\\
	\text{Lemma}~\ref{lem:tech1}-(ii)\qquad	&=(r+1)(\sigma_{r+1}(A)+\dots+\sigma_n(A)),
	\end{align*}
	where we used that once $\widehat J$ is fixed, there are $r+1$ possible choices for $J$. 
	
	Finally, we have $$\norm{A-A_J^*}_F\leq \norm{A-A_J^*}_*\leq(r+1)\sum_{s\geq r+1}\sigma_s(A)\leq \sqrt{n-r}(r+1)\sqrt{\sum_{s\geq r+1}\sigma_s(A)^2},$$ where the last inequality follows from the Cauchy–Schwarz inequality.
\end{proof}
\subsection{Derandomizing Theorem~\ref{thm:main}}\label{sec:alg-cert1} 
Following the approach in \cite{cortinovis2019low},
we obtain a deterministic algorithm for computing a cross approximation, which verifies \eqref{eq:certified}, by derandomizing Theorem~\ref{thm:main}. In order to do so, we need to determine the conditional expectation of the cross approximation error, with respect to a partial choice of the indices in $J$.
\begin{theorem} \label{thm:conditional}
	Let $A\in\mathbb R^{n\times n}$ be SPSD and $J_t:=\{j_1,\dots,j_t\}\subset \{1,\dots, n\}$ such that $A(J_t,\ J_t)$ is invertible, then
	$$
	\mathbb E (\norm{A-A_J}_*\ |\ j_1,\dots, j_t)=(r-t+1) \frac{ c_{n-r+t-1}(A-A_{J_t})}{c_{n-r+t}(A-A_{J_t})}.
	$$
\end{theorem}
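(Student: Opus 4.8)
The plan is to compute the conditional expectation straight from its definition, using the nuclear-norm identity of Lemma~\ref{lem:tech1}-$(i)$ to linearize $\norm{A-A_J}_*$, and then invoking the summation identity of Lemma~\ref{lem:tech1}-$(iii)$ twice: once for the numerator and once for the normalizing constant of the conditional law.

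First I would record the conditional distribution. Since the unconditional law on $\Omega_r$ is $\mathbb P(A(J,\ J))=\gamma\,\mathcal V(A(J,\ J))$, conditioning on the event $\{j_1,\dots,j_t\}\subset J$ gives $\mathbb P(J\mid j_1,\dots,j_t)=\mathcal V(A(J,\ J))/Z_t$ with $Z_t:=\sum_{|J|=r,\ J_t\subset J}\mathcal V(A(J,\ J))$, so that
$$
\mathbb E(\norm{A-A_J}_*\mid j_1,\dots,j_t)=\frac1{Z_t}\sum_{|J|=r,\ J_t\subset J}\mathcal V(A(J,\ J))\,\norm{A-A_J}_*.
$$
Then I would substitute Lemma~\ref{lem:tech1}-$(i)$, which replaces $\mathcal V(A(J,\ J))\,\norm{A-A_J}_*$ by $\sum_{|\widehat J|=r+1,\ J\subset\widehat J}\mathcal V(A(\widehat J,\ \widehat J))$; the factors $\mathcal V(A(J,\ J))$ cancel and the numerator becomes a double sum over nested pairs $J_t\subset J\subset\widehat J$ with $|J|=r$, $|\widehat J|=r+1$. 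Exchanging the order of summation, each admissible $\widehat J$ is produced by exactly $r+1-t$ choices of $J$ (delete one element of $\widehat J\setminus J_t$), so the numerator equals $(r+1-t)\sum_{|\widehat J|=r+1,\ J_t\subset\widehat J}\mathcal V(A(\widehat J,\ \widehat J))$.

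Finally I would close the argument by evaluating the two remaining sums with Lemma~\ref{lem:tech1}-$(iii)$. Applied with the nested sets $J_t\subset J$ of sizes $t<r$ it yields $Z_t=\mathcal V(A(J_t,\ J_t))\,c_{n-r+t}(A-A_{J_t})$, and applied with $J_t\subset\widehat J$ of sizes $t<r+1$ it yields $\sum_{|\widehat J|=r+1,\ J_t\subset\widehat J}\mathcal V(A(\widehat J,\ \widehat J))=\mathcal V(A(J_t,\ J_t))\,c_{n-r+t-1}(A-A_{J_t})$ (in both cases one first passes from the ordered tuples $j_{t+1},\dots$ appearing in the lemma to unordered subsets, which absorbs the factorial prefactor). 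Dividing, the common factor $\mathcal V(A(J_t,\ J_t))$ cancels and one is left with exactly $(r-t+1)\,c_{n-r+t-1}(A-A_{J_t})/c_{n-r+t}(A-A_{J_t})$.

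I do not expect a genuine obstacle here; the proof is essentially a careful rearrangement of Lemma~\ref{lem:tech1}. The two points that need attention are the combinatorial bookkeeping — getting the multiplicity $r+1-t$ right in the swapped sum, and tracking the index shift from $c_{n-r+t}$ (for $r$-subsets) to $c_{n-r+t-1}$ (for $(r+1)$-subsets) — and checking that the conditional law is well defined, i.e.\ $Z_t\neq 0$: this holds because $A-A_{J_t}$ is SPSD of rank $k-t\ge r-t$, so $c_{n-r+t}(A-A_{J_t})$, being the $(r-t)$-th elementary symmetric function of its nonnegative eigenvalues, is strictly positive.
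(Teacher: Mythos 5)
Your proof is correct and follows essentially the same route as the paper: write out the conditional law, linearize $\norm{A-A_J}_*$ via Lemma~\ref{lem:tech1}-$(i)$, and evaluate numerator and denominator with Lemma~\ref{lem:tech1}-$(iii)$. The only cosmetic difference is that you extract the factor $r-t+1$ as a multiplicity when swapping the nested sums over unordered subsets, whereas the paper obtains it as the ratio $(r+1-t)!/(r-t)!$ from the ordered-tuple form of Lemma~\ref{lem:tech1}-$(iii)$; your added observation that $Z_t>0$ is a welcome extra.
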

\begin{proof}
	\begin{align*}
	\mathbb E (\norm{A-A_J}_*\ |\ j_1,\dots, j_t)&= \sum_{j_{t+1},\dots,j_{r}}\norm{A-A_J}_*\ \mathbb P(A(J,\ J)\ |\ j_1,\dots,j_t)\\
	&=\sum_{j_{t+1},\dots,j_{r}}\norm{A-A_J}_*\ \frac{\mathbb P(A(J,\ J))}{\mathbb P(A(J_t,\ J_t))}\\
	&=\sum\limits_{j_{t+1},\dots,j_{r}}\norm{A-A_J}_*\  \frac{\mathcal V(A(J,\ J))}{\sum\limits_{j_{t+1},\dots,j_{r+1}}\mathcal V(A(J,\ J))}\\
	\text{Lemma}~\ref{lem:tech1}-(i)\ \ \qquad&=\frac{\sum_{j_{t+1},\dots,j_{r+1}} \mathcal V(A(\{J,\ j_{r+1}\}, \{J,\ j_{r+1}\}))}{\sum_{j_{t+1},\dots,j_r}\mathcal V(A(J,\ J)}\\
	\text{Lemma}~\ref{lem:tech1}-(iii)\qquad&=(r-t+1) \frac{ c_{n-r+t-1}(A-A_{J_t})}{c_{n-r+t}(A-A_{J_t})}.
	\end{align*}
\end{proof}
Theorem~\ref{thm:conditional} suggests to design an iterative scheme that in each step computes the characteristic polynomial of $A-A_{J_t}$ for all the possible choices of the last index $j_t$ and select the one which minimizes $\frac{ c_{n-r+t-1}(A-A_{J_t})}{c_{n-r+t}(A-A_{J_t})}$. Interpreting $A-A_{J_t}$ as a rank-$1$ modification of $A-A_{J_{t-1}}$, we may look at the problem of updating the coefficients of the characteristic polynomial under a rank-$1$ change of the matrix. Since stable procedures, such as the \emph{Summation Algorithm} \cite[Algorithm 1]{rehman2011computing}, compute the characteristic polynomial from the eigenvalues, our task boils down to updating the eigenvalues of an SPSD matrix and in turn to computing the eigenvalues of a real diagonal matrix minus a rank-$1$ symmetric matrix. The latter can be transformed into a symmetric tridiagonal eigenvalue problem with a standard bulge chasing procedure \cite[Section 5]{golub1973some} and finally solved with Cuppen's divide and conquer method~\cite{cuppen1980divide}. Both tridiagonalization and Cuppen's method require $\mathcal O(n^2)$ flops.

The \emph{certified cross approximation (CCA)}  obtained from the derandomization of Theorem~\eqref{thm:main} is reported in Algorithm~\ref{alg:aca-cert}. Note that all the operations inside the inner loop have at most a quadratic cost and computing the eigendecomposition at line~\ref{step:eig} is cubic. Therefore, the asymptotic computational cost  is $\mathcal O(rn^3)$.

\begin{algorithm}
	\caption{Certified cross approximation for SPSD matrices \label{alg:aca-cert}}
	\begin{algorithmic}[1]
		\Procedure{cca}{$A,r$}
		\State{Set $R:=A,\, J:=\emptyset.$}
		\For{$t = 1\ldots,r$}
		\State Compute the eigendecomposition $R=Q\Lambda Q^\top$\label{step:eig}
		\State Compute the characteristic polynomial of $R$ via \cite[Algorithm 1]{rehman2011computing}
		\State min\_ratio $\gets\infty$
		\For{$j \in \{1\ldots,n\}\setminus J$}
		\State $u_j= R(:,\ j)/\sqrt{R(j,\ j)}$, $\quad\widetilde u_j = Q^\top u_j$
		\State Reduce $\Lambda -\widetilde u_i\widetilde u_j^\top$ to a tridiagonal matrix $T$ via bulge chasing
		\State Compute the eigenvalues of $T$ with Cuppen's method
		\State Compute the characteristic polynomial of $R-u_ju_j^\top$ via \cite[Algorithm 1]{rehman2011computing}
		\State   $\text{ratio}\gets\frac{c_{n-r+t-1}(R-u_ju_j^\top)}{\vphantom{\widehat C}c_{n-r+t}(R-u_ju_j^\top)}$
		\If {ratio $<$ min\_ratio}
		\State min\_ratio $\gets $ ratio,  $j^*\gets j$
		\EndIf
		\EndFor
		\State $R\gets R-u_{j^*}u_{j^*}^\top$
		\State $J\gets J\cup\{j^*\}$
		\EndFor
		\EndProcedure
	\end{algorithmic}
\end{algorithm}
\subsection{Updating the characteristic polynomial via trace of powers}\label{sec:alg-cert2} 
Each iteration of Algorithm~\ref{alg:aca-cert} requires to update the eigendecomposition of the residual matrix, resulting in a computational cost $\mathcal O(rn^3)$. Here we discuss how, in principle, to reduce the complexity to $\mathcal O(r^2n^\omega)$ where $2<\omega<3$ is the exponent of the computational complexity of the matrix-matrix multiplication. The idea is that, since we need to update only a (small) portion of the characteristic polynomial we may avoid to deal with the eigendecomposition. 

The coefficients of the characteristic polynomial of a matrix $A$ can be expressed with the so called \emph{Plemelj-Smithies formula} \cite[Theorem XII 1.108]{reedmethods}
\begin{equation}\label{eq:jacobi}
c_{n-k}(A)=\frac{(-1)^k}{k!}\det\left(\underbrace{\begin{bmatrix}\trace(A)& k-1\\
	\trace(A^2)&\trace(A)&k-2\\
	\vdots&\ddots&\ddots&\ddots\\
	\vdots&\ddots&\ddots&\ddots&1\\
	\trace(A^k)&\dots&\dots&\trace(A^2)&\trace(A)
	\end{bmatrix}}_{T_{(k)}}\right),
\end{equation}
so that
\begin{equation}\label{eq:det-ratio}
\frac{c_{n-(k+1)}(A)}{c_{n-k}(A)}=-\frac{1}{k+1}\frac{\det(T_{(k+1)})}{\det(T_{(k)})}.
\end{equation}
Equation~\eqref{eq:jacobi} says that for updating  the $(n-k)$-th coefficient of the characteristic polynomial it is sufficient to update the trace of the first $k$ powers of $A$ and to compute the determinant of a $k\times k$ matrix. Interestingly, if  $\trace(A),\dots,\trace(A^k)$ are known then the quantities $\trace(A-uu^\top),\dots,\trace((A-uu^\top)^k)$,  for a vector $u\in\mathbb R^n$, can be computed with a Krylov projection method.  More specifically, we have the following property \cite[Theorem 3.2]{beckermann2018low}:
$$
(A-uu^\top)^k- A^k\in\mathcal K_k(A, u):=\mathrm{span}(u, Au,\dots, A^{k-1}u).
$$
Let $H_k$ and $\widetilde H_k:=H_k- \norm{u}_2 e_1e_1^\top$ be the orthogonal projections of $A$ and $A-uu^\top$ on $\mathcal K_k(A, u)$, then it holds
\begin{equation}\label{eq:traces}
\trace((A-uu^\top)^j)- \trace(A^j) = \trace(\widetilde H_k^j)- \trace(H_k^j),\quad j=1,\dots,k.
\end{equation}
Hence, to update the traces of the first $k$ powers of $A$ we may perform $k$ steps of the Arnoldi method to get $\widetilde H_k,H_k$, compute the trace of their powers (via their eigenvalues)  and, finally, evaluate \eqref{eq:traces}. 

Updating the traces for a single low-rank modification  costs $\mathcal O(k\cdot\text{matvec}(A) + k^2n)$; so  a procedure that naively applies this computation for the $\mathcal O(n)$ low-rank modifications still provides a cubic iteration cost --- with respect to $n$ --- unless $\text{matvec}(A)$ has a subquadratic cost. In the case $\mathcal O(\text{matvec}(A))=\mathcal O(n^2)$, we propose to carry on the Arnoldi step simultaneously for all the $\mathcal O(n)$ low-rank modifications $u_{i}u_i^\top$. More specifically, if $u_{i(h)}$ denotes the $h$-th vector computed by the Arnoldi process for $\mathcal K_k(A,u_{i})$, then we perform all the Arnoldi steps together by computing the matrix-matrix multiplication $A\cdot [u_{1(h)}|\dots|u_{n-k+1(h)}]$. Theoretically, this yields the iteration cost $\mathcal O(kn^\omega)$.
This has also practical benefits because of the use of highly optimized BLAS 3 operations. 
The procedure for updating the trace of powers is reported in Algorithm~\ref{alg:trace-update}; the certified cross approximation method (CCA2) that relies on Algorithm~\ref{alg:trace-update} is reported in Algorithm~\ref{alg:cert-aca2}.

Unfortunately, Algorithm~\ref{alg:cert-aca2} suffers from the numerical instability of evaluating the determinant in \eqref{eq:jacobi}. More specifically, when the matrix $T_{(k)}$ becomes nearly singular the use of standard techniques provide small singular values, which are accurate only in an absolute sense. Methods that guarantee relative accuracy for singular values apply only to particular classes of matrices \cite{demmel1999computing,demmel2004accurate}; $T_{(k)}$ does not belong to any of such classes. On top of that, we often observe that the matrix  $T_{(k)}$ becomes nearly singular quite fast as $k$ increases; typically for $k$ above $10$ the computed ratio \eqref{eq:det-ratio} has no reliable digits. In the next section we propose a strategy to partially circumvent this problem.

\begin{minipage}[t]{.48\linewidth}
	\begin{algorithm}[H] 
		\small 
		\caption{CCA via trace of powers}\label{alg:cert-aca2}
		\begin{algorithmic}[1]
			\Procedure{cca2}{$A,r$}
			\State Compute  $\mathbf t=(t_j)_{j=1,\dots, r+1},  t_j=\trace(A^j)$ via $r$ matrix-matrix multiplications
			\State Set $R:=A$, $J:= \emptyset$
			\For{$k:=1,2,\dots,r $} 
			\State $u_{j_h}=R(:,\ j_h)/\sqrt{R(j_h,\ j_h)}$, $\quad j_h\not\in  J$
			\State $U=[u_{j_1}|\dots|u_{j_{n-r+1}}]$
			\State $T\gets$ \Call{update\_traces}{$A,\mathbf t, U, r-k+2$}
			\State Set min\_ratio=$\infty$
			\For{$j_h\in\{1,\dots n\}\setminus J$}
			\State  $r_{j_h}\gets\frac{c_{n-(t+1)}(R_J-u_{j_h}u_{j_h}^\top)}{\vphantom{\widehat{ C_{j_h}^\top}}c_{n-t}(R_J-u_{j_h}u_{j_h}^\top)}$ via \eqref{eq:det-ratio}
			\If {$r_{j_h}<$ min\_ratio}
			\State min\_ratio $\gets r_{j_h}$, $h^*\gets h$
			\EndIf
			\EndFor
			\State $\mathbf t\gets T(h^*,\ 1:r-k+1)$
			\State $R\gets R-u_{j_{h^*}}u_{j_{h^*}}^\top$
			\State $J\gets J\cup\{j_{h^*}\}$
			\EndFor
			\State \Return $J$
			\EndProcedure
		\end{algorithmic}
	\end{algorithm} 
\end{minipage}
\begin{minipage}[t]{.48\linewidth}
	\begin{algorithm}[H] 
		\small 
		\caption{Update the trace of powers \vspace{1pt} }\label{alg:trace-update}
		\begin{algorithmic}[1]
			\Procedure{update\_traces}{$A,\mathbf t, U,k$}
			\State Set $V_j=U(:,\ j)/\norm{U(:,\ j)}_2$,\Comment{$U\in\mathbb R ^{n\times s}$}
			\For{$h=1,\dots,k$}
			\State $U\gets A\cdot U$
			\For{$j=1,\dots s$}
			\State $H_j(1:h,\ h) =V_j^\top U(:,\ j)$
			\State $U(:,\ j)\gets U(:,\ j)-V_jH_j(1:h,\ h)$
			\State $H_j(h,\ h+1) =  \norm{U(:,\ j)}_2$
			\State $U(:,\ j)\gets U(:,\ j)/H_j(h,\ h+1)$
			\State $V_j\gets[V_j,\ U(:,\ j)]$
			\EndFor
			\EndFor
			\For{$j=1,\dots, s$}
			\State $H_j\gets H_j(1:k,\ :)$
			\State $\mathbf \lambda=$ \texttt{eig($H_j$)}
			\State $\mathbf{ \widetilde \lambda}=$ \texttt{eig($H_j- \norm{u}_2 e_1e_1^\top$)}
			\State $\mathbf {\widehat t}=(\widehat t_j)_{j=1,\dots, k}, \quad t_j=\sum_{h=1}^n (\widetilde \lambda_h^j-\lambda_h^j)$
			\State $T(j,\ :) = \mathbf t + \mathbf{\widehat t}$
			\EndFor
			\State \Return $T$
			\EndProcedure
		\end{algorithmic}
	\end{algorithm} 
\end{minipage}
\subsubsection{A restarted algorithm}

In view of the instability issues related to evaluating \eqref{eq:det-ratio}, we propose to combine Algorithm~\ref{alg:cert-aca2} with a restarting mechanism. Let us assume that the rank of the sought cross approximation is $r$ and that $\bar r< r$ is a small value for which \eqref{eq:det-ratio} can be computed with a sufficient accuracy. We might think at forming the index set $J$ by the incremental application of  Algorithm~\ref{alg:cert-aca2} with input parameter $\bar r$. This means that we first compute  a certified cross approximation of rank $\bar r$ of $A$. Then, we add to the latter a certified cross approximation of rank $\bar r$ of the residual matrix, and so on and so for. The procedure stops when we reach an index set $J$ of cardinality $r$.
We call this method \emph{quasi certified cross approximation} (\textsc{quasi\_cca})  and we report its pseudocode  in Algorithm~\ref{alg:quasi-certified}. The asymptotic cost of \textsc{quasi\_cca} is $r/\bar r$ times the one of \textsc{cca2} for a submatrix of size $\bar r\times \bar r$, that is $\mathcal O(\bar r r n^\omega)$. Even though the cross approximation returned by Algorithm~\ref{alg:quasi-certified} is not guaranteed to verify \eqref{eq:certified}, it is usually the case, as we will see in the numerical results.

\begin{algorithm}[H] 
	\small 
	\caption{}\label{alg:quasi-certified}
	\begin{algorithmic}[1]
		\Procedure{quasi\_cca}{$A,r,\bar r$}
		\State Set $J=\emptyset$
		\While {$r>0$}
		\State $\widehat J=$ \Call{cca2}{$A,\min\{r,\bar r\}$}
		\State $A\gets A-A_{\widehat J}$
		\State $J\gets J\cup\widehat J$
		\State $r=r-\bar r$
		\EndWhile
		\State \Return $J$
		\EndProcedure
	\end{algorithmic}
\end{algorithm}

\subsection{Numerical results}\label{sec:test2}
Let us compare the performances of Algorithm~\ref{alg:aca-cert} and Algorithm~\ref{alg:quasi-certified} on the test matrices $A_1,A_2,A_3,$ $A_5$ introduced in Section\ref{sec:test1}. The bulge chasing procedure used in Algorithm~\ref{alg:aca-cert} has been implemented in Fortran and
is called via a MEX interface. When executing Algorithm~\ref{alg:quasi-certified}, the parameter $\bar r$ has been set to $5$.
\begin{paragraph}{Test 5.} 
	We set  $n=100$, $\rho = 0.85$ and we measure the nuclear norm of the cross approximation error, $\norm{A-A_J}_*$, obtained with \textsc{cca} and \textsc{quasi\_cca} as the parameter $r$ increases. The results are shown in Figure~\ref{fig:cca}, where we also report the upper bound provided by Theorem~\ref{thm:main} and the lower bound $g(r):=\sum_{j\geq r+1}\sigma_j$, corresponding to the approximation error of the truncated SVD (TSVD). We see that, on all examples, the accuracy of \textsc{cca} and \textsc{quasi\_cca} is really close and often the convergence curves are not distinguishable. In addition, in the examples where the decay of the singular values is slow we notice that  Theorem~\ref{thm:main} tends to be pessimistic and the accuracy of \textsc{cca} and \textsc{quasi\_cca} is very close to the one of the TSVD. 
	
	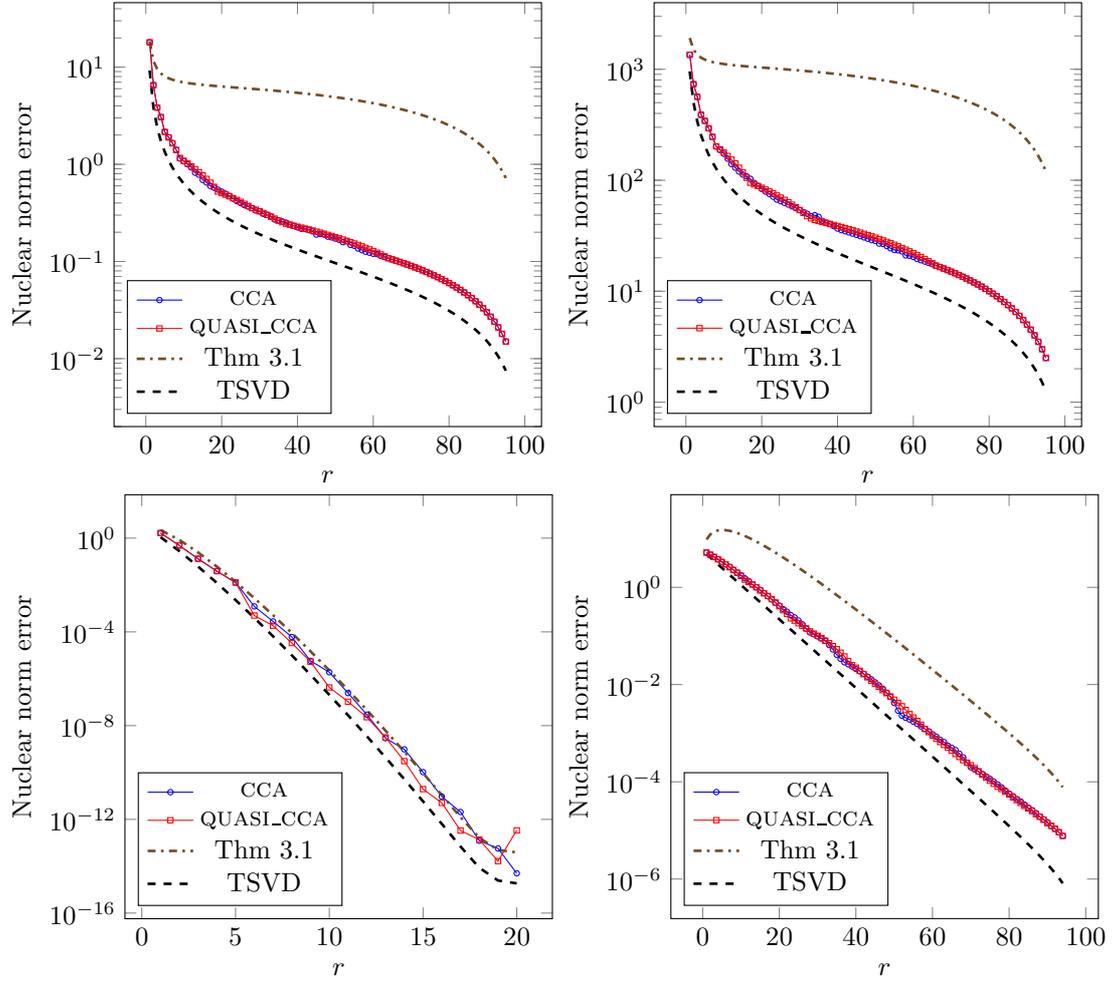
\begin{figure}
		\begin{minipage}{.49\linewidth}
			\begin{tikzpicture}
			\begin{semilogyaxis}[xlabel = $r$, ylabel = Nuclear norm error,width=\linewidth,
			height=1\linewidth,legend pos = south west, ymin =2e-3]
			\addplot+[mark size=1pt, mark = o, blue] table[x index = 0, y index = 2]
			{cca_exp_ker.dat};
			\addplot+[mark size=1pt, mark = square, red] table[x index = 0, y index = 1]
			{cca_exp_ker.dat};		
			\addplot+[line width = 1pt, dashdotted, mark = none] table[x index = 0, y index = 3]
			{cca_exp_ker.dat};	
			\addplot+[line width = 1pt, dashed, mark = none] table[x index = 0, y index = 4]
			{cca_exp_ker.dat};	
			\legend{\textsc{cca}, \textsc{quasi\_cca}, Thm~\ref{thm:main}, TSVD}
			\end{semilogyaxis}
			\end{tikzpicture}
			\begin{tikzpicture}
			\begin{semilogyaxis}[xlabel = $r$, ylabel = Nuclear norm error,width=\linewidth,
			height=1\linewidth,legend pos = south west]
			\addplot+[mark size=1pt, mark = o] table[x index = 0, y index = 2]
			{cca_hilb.dat};
			\addplot+[mark size=1pt, mark = square] table[x index = 0, y index = 1]
			{cca_hilb.dat};		
			\addplot+[line width = 1pt, dashdotted, mark = none] table[x index = 0, y index = 3]
			{cca_hilb.dat};	
			\addplot+[line width = 1pt, dashed, mark = none] table[x index = 0, y index = 4]
			{cca_hilb.dat};	
			\legend{\textsc{cca}, \textsc{quasi\_cca}, Thm~\ref{thm:main}, TSVD}
			\end{semilogyaxis}
			\end{tikzpicture}
		\end{minipage}~\begin{minipage}{.49\linewidth}
			\begin{tikzpicture}
			\begin{semilogyaxis}[xlabel = $r$, ylabel = Nuclear norm error,width=\linewidth,
			height=1\linewidth,legend pos = south west, ymin =0]
			\addplot+[mark size=1pt, mark = o] table[x index = 0, y index = 2]
			{cca_min.dat};
			\addplot+[mark size=1pt, mark = square] table[x index = 0, y index = 1]
			{cca_min.dat};		
			\addplot+[line width = 1pt, dashdotted, mark = none] table[x index = 0, y index = 3]
			{cca_min.dat};	
			\addplot+[line width = 1pt, dashed, mark = none] table[x index = 0, y index = 4]
			{cca_min.dat};	
			\legend{\textsc{cca}, \textsc{quasi\_cca}, Thm~\ref{thm:main}, TSVD}
			\end{semilogyaxis}
			\end{tikzpicture}
			\begin{tikzpicture}
			\begin{semilogyaxis}[xlabel = $r$, ylabel = Nuclear norm error,width=\linewidth,
			height=1\linewidth,legend pos = south west]
			\addplot+[mark size=1pt, mark = o] table[x index = 0, y index = 2]
			{cca_exp.dat};
			\addplot+[mark size=1pt, mark = square] table[x index = 0, y index = 1]
			{cca_exp.dat};		
			\addplot+[line width = 1pt, dashdotted, mark = none] table[x index = 0, y index = 3]
			{cca_exp.dat};	
			\addplot+[line width = 1pt, dashed, mark = none] table[x index = 0, y index = 4]
			{cca_exp.dat};	
			\legend{\textsc{cca}, \textsc{quasi\_cca}, Thm~\ref{thm:main}, TSVD}
			\end{semilogyaxis}
			\end{tikzpicture}
		\end{minipage}
		\caption{Nuclear norm of the error associated with the cross approximations returned by Algorithm~\ref{alg:aca-cert} and Algorithm~\ref{alg:quasi-certified} on the test matrices $A_1$ (top-left), $A_2$ (top-right), $A_3$ (bottom-left) and $A_5$ (bottom-right). All plots report the upper bound provided by Theorem~\ref{thm:main} and the lower bound given by the error of the truncated SVD.}\label{fig:cca}
	\end{figure}
\end{paragraph}
\begin{paragraph}{Test 6.}
	Finally, we test the computational cost of the proposed numerical procedure. We fix $r=20$, $\rho=0.85$ and we run Algorithm~\ref{alg:aca-cert} and Algorithm~\ref{alg:quasi-certified} on $A_5$ for $n\in\{50,100,200,400,800,1600\}$. The timings, reported in Figure~\ref{fig:cca_times}, confirm the cubic complexity with respect to $n$ of Algorithm~\ref{alg:aca-cert}. Although the complexity of the implementation of \textsc{quasi\_cca} is cubic as well (no fast matrix multiplication algorithm has been implemented), it results in a significant gain of computational time due to the more intense use of BLAS 3 operations.  
	
	\begin{figure}
		\centering
		\begin{tikzpicture}
		\begin{loglogaxis}[xlabel = $n$, ylabel = Time (s),width=.5\linewidth,legend pos = south east]
		\addplot+[mark size=1pt, mark = o] table[x index = 0, y index = 2]
		{cca_exp_times.dat};
		\addplot+[mark size=1pt, mark = square] table[x index = 0, y index = 1]
		{cca_exp_times.dat};		
		\addplot[domain=50:1600, dashed]{1e-7*x^3};	 		
		\legend{ \textsc{cca},\textsc{quasi\_cca},$\mathcal O(n^3)$}
		\end{loglogaxis}
		\end{tikzpicture}
		\caption{Timings of Algorithm~\ref{alg:aca-cert} and Algorithm~\ref{alg:quasi-certified} on the test matrix $A_5$ for $r = 20$ and  $n\in\{50, 100,200,$ $400,800,1600\}$.}\label{fig:cca_times}
	\end{figure}
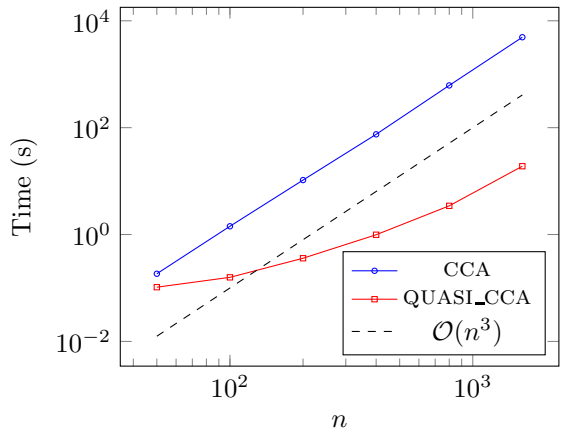
\end{paragraph}
\section{Outlook}
We have proposed several numerical methods for the solution of problems related to the selection of the maximum volume submatrix and the cross approximation of symmetric definite matrices. 

We remark that, the idea used for deriving Algorithm~\ref{alg:aca_ratio} and Algorithm~\ref{alg:mvol_ratio} extends easily to combinatorial optimization problems of the form 
$$
\max_{J\subset \{1,\dots, n\}, \ |J|=r}f(\mathcal V(A_1(J,\ J),\dots, \mathcal V(A_p(J,\ J))
$$
for a multivariate function $f$ and SPSD matrices $A_1,\dots,A_p$.

Also the second part of the manuscript can inspire some future works. For instance, the fact that the maximum volume submatrix of a diagonally dominant matrix is principal might suggest that a result analogous to Theorem~\ref{thm:main} holds also for diagonally dominant matrices. However, it is not straightforward to adjust the proof of Theorem~\ref{thm:main} to this case because we lose the connection between the sum of the volumes of the principal submatrices and the coefficients of the characteristic polynomial. 

Another interesting point is to understand whether the ratio of determinants in \eqref{eq:det-ratio} can be computed with high relative accuracy. This would pave the way to the use of \textsc{cca2} without incorporating any restart mechanisms.

Finally, in the case of large scale matrices one might derive new scalable algorithms for computing cross approximations by combining Algorithm~\ref{alg:aca-cert}--\ref{alg:quasi-certified} with heuristic techniques for reducing the dependence on $n$ in the computational cost. 
\bibliographystyle{abbrv}
\bibliography{library}

\begin{thebibliography}{10}

\bibitem{arioli2015preconditioning}
M.~Arioli and I.~S. Duff.
\newblock Preconditioning linear least-squares problems by identifying a basis
  matrix.
\newblock {\em SIAM Journal on Scientific Computing}, 37(5):S544--S561, 2015.

\bibitem{bebendorf2000approximation}
M.~Bebendorf.
\newblock Approximation of boundary element matrices.
\newblock {\em Numerische Mathematik}, 86(4):565--589, 2000.

\bibitem{bebendorf2008hierarchical}
M.~Bebendorf.
\newblock {\em Hierarchical {M}atrices}.
\newblock Springer, 2008.

\bibitem{beckermann2018low}
B.~Beckermann, D.~Kressner, and M.~Schweitzer.
\newblock Low-rank updates of matrix functions.
\newblock {\em {SIAM} Journal on Matrix Analysis and Applications},
  39(1):539--565, 2018.

\bibitem{ccivril2009selecting}
A.~{\c{C}}ivril and M.~Magdon-Ismail.
\newblock On selecting a maximum volume sub-matrix of a matrix and related
  problems.
\newblock {\em Theoretical Computer Science}, 410(47-49):4801--4811, 2009.

\bibitem{cortinovis2019low}
A.~Cortinovis and D.~Kressner.
\newblock Low-rank approximation in the {F}robenius norm by column and row
  subset selection.
\newblock {\em arXiv preprint arXiv:1908.06059}, 2019.

\bibitem{cortinovis2020maximum}
A.~Cortinovis, D.~Kressner, and S.~Massei.
\newblock On maximum volume submatrices and cross approximation for symmetric
  semidefinite and diagonally dominant matrices.
\newblock {\em Linear Algebra and its Applications}, 593:251--268, 2020.

\bibitem{cuppen1980divide}
J.~J. Cuppen.
\newblock A divide and conquer method for the symmetric tridiagonal
  eigenproblem.
\newblock {\em Numerische Mathematik}, 36(2):177--195, 1980.

\bibitem{demko1984decay}
S.~Demko, W.~F. Moss, and P.~W. Smith.
\newblock Decay rates for inverses of band matrices.
\newblock {\em Mathematics of Computation}, 43(168):491--499, 1984.

\bibitem{demmel1999computing}
J.~Demmel, M.~Gu, S.~Eisenstat, I.~Slapni{\v{c}}ar, K.~Veseli{\'c}, and
  Z.~Drma{\v{c}}.
\newblock Computing the singular value decomposition with high relative
  accuracy.
\newblock {\em Linear Algebra and its Applications}, 299(1-3):21--80, 1999.

\bibitem{demmel2004accurate}
J.~Demmel and P.~Koev.
\newblock Accurate {SVD}s of weakly diagonally dominant {M}-matrices.
\newblock {\em Numerische Mathematik}, 98(1):99--104, 2004.

\bibitem{deshpande2010efficient}
A.~Deshpande and L.~Rademacher.
\newblock Efficient volume sampling for row/column subset selection.
\newblock In {\em 2010 IEEE 51st annual symposium on foundations of computer
  science}, pages 329--338. IEEE, 2010.

\bibitem{fasshauer2015kernel}
G.~E. Fasshauer and M.~J. McCourt.
\newblock {\em Kernel-based approximation methods using Matlab}, volume~19.
\newblock World Scientific Publishing Company, 2015.

\bibitem{golub1973some}
G.~H. Golub.
\newblock Some modified matrix eigenvalue problems.
\newblock {\em SIAM Review}, 15(2):318--334, 1973.

\bibitem{goreinov2010find}
S.~A. Goreinov, I.~V. Oseledets, D.~V. Savostyanov, E.~E. Tyrtyshnikov, and
  N.~L. Zamarashkin.
\newblock How to find a good submatrix.
\newblock In {\em Matrix Methods: Theory, Algorithms And Applications:
  Dedicated to the Memory of Gene Golub}, pages 247--256. World Scientific,
  2010.

\bibitem{goreinov2001maximal}
S.~A. Goreinov and E.~E. Tyrtyshnikov.
\newblock The maximal-volume concept in approximation by low-rank matrices.
\newblock {\em Contemporary Mathematics}, 280:47--52, 2001.

\bibitem{gu1996efficient}
M.~Gu and S.~C. Eisenstat.
\newblock Efficient algorithms for computing a strong rank-revealing {QR}
  factorization.
\newblock {\em SIAM Journal on Scientific Computing}, 17(4):848--869, 1996.

\bibitem{gu2004strong}
M.~Gu and L.~Miranian.
\newblock Strong rank revealing cholesky factorization.
\newblock {\em Electronic Transactions on Numerical Analysis}, 17:76--92, 2004.

\bibitem{haber2016sparse}
A.~Haber and M.~Verhaegen.
\newblock Sparse solution of the {L}yapunov equation for large-scale
  interconnected systems.
\newblock {\em Automatica}, 73:256--268, 2016.

\bibitem{harbrecht2012low}
H.~Harbrecht, M.~Peters, and R.~Schneider.
\newblock On the low-rank approximation by the pivoted {C}holesky
  decomposition.
\newblock {\em Applied Numerical Mathematics}, 62(4):428--440, 2012.

\bibitem{higham2002accuracy}
N.~J. Higham.
\newblock {\em Accuracy and stability of numerical algorithms}, volume~80.
\newblock SIAM, 2002.

\bibitem{kiefer1961optimum}
J.~Kiefer.
\newblock Optimum experimental designs {V}, with applications to systematic and
  rotatable designs.
\newblock In {\em Proceedings of the fourth Berkeley symposium on mathematical
  statistics and probability}, volume~1, pages 381--405. Univ of California
  Press, 1961.

\bibitem{kressner2020certified}
D.~Kressner, J.~Latz, S.~Massei, and E.~Ullmann.
\newblock Certified and fast computations with shallow covariance kernels.
\newblock {\em arXiv preprint arXiv:2001.09187}, 2020.

\bibitem{mikhalev2018rectangular}
A.~Mikhalev and I.~V. Oseledets.
\newblock Rectangular maximum-volume submatrices and their applications.
\newblock {\em Linear Algebra and its Applications}, 538:187--211, 2018.

\bibitem{oseledets2010tt}
I.~Oseledets and E.~Tyrtyshnikov.
\newblock {TT}-cross approximation for multidimensional arrays.
\newblock {\em Linear Algebra and its Applications}, 432(1):70--88, 2010.

\bibitem{osinsky2018rectangular}
A.~Osinsky.
\newblock Rectangular maximum volume and projective volume search algorithms.
\newblock {\em arXiv preprint arXiv:1809.02334}, 2018.

\bibitem{reedmethods}
M.~Reed and B.~Simon.
\newblock Methods of modern mathematical physics, vol. 4 analysis of operators,
  1978.

\bibitem{rehman2011computing}
R.~Rehman and I.~C. Ipsen.
\newblock Computing characteristic polynomials from eigenvalues.
\newblock {\em SIAM Journal on Matrix Analysis and Applications},
  32(1):90--114, 2011.

\bibitem{sommariva2009computing}
A.~Sommariva and M.~Vianello.
\newblock Computing approximate {F}ekete points by {QR} factorizations of
  vandermonde matrices.
\newblock {\em Computers \& Mathematics with Applications}, 57(8):1324--1336,
  2009.

\bibitem{stewart1998matrix}
G.~W. Stewart.
\newblock {\em Matrix Algorithms: Volume 1: Basic Decompositions}.
\newblock SIAM, 1998.

\bibitem{summa2014largest}
M.~D. Summa, F.~Eisenbrand, Y.~Faenza, and C.~Moldenhauer.
\newblock On largest volume simplices and sub-determinants.
\newblock In {\em Proceedings of the twenty-sixth annual ACM-SIAM symposium on
  Discrete algorithms}, pages 315--323. SIAM, 2014.

\bibitem{tyrtyshnikov2000incomplete}
E.~Tyrtyshnikov.
\newblock Incomplete cross approximation in the mosaic-skeleton method.
\newblock {\em Computing}, 64(4):367--380, 2000.

\bibitem{zamarashkin2018existence}
N.~Zamarashkin and A.~Osinsky.
\newblock On the existence of a nearly optimal skeleton approximation of a
  matrix in the {F}robenius norm.
\newblock In {\em Doklady Mathematics}, volume~97, pages 164--166. Springer,
  2018.

\end{thebibliography}
	\end{document}